\documentclass[reqno]{amsart}
\usepackage{amssymb,latexsym,amsmath,amsthm,enumerate,amsbsy}
\usepackage[mathscr]{eucal}
\usepackage{framed,color,graphicx}
\usepackage{mathrsfs}
\usepackage[all]{xy}
\usepackage{tikz}
\usepackage[sort,compress]{cite}
\usetikzlibrary{positioning,decorations.pathreplacing,patterns,decorations.pathmorphing}
\tikzset{%
element/.style={draw, shape=circle, fill=white, inner sep=1.4pt}
}

\DeclareSymbolFont{bbold}{U}{bbold}{m}{n}
\DeclareSymbolFontAlphabet{\mathbbold}{bbold}

\theoremstyle{plain}
\newtheorem{thm}{Theorem}[section]

\newtheorem{lemma}[thm]{Lemma}
\newtheorem{cor}[thm]{Corollary}
\newtheorem{corollary}[thm]{Corollary}
\newtheorem{pro}[thm]{Proposition}
\newtheorem{proposition}[thm]{Proposition}

\newtheorem{problem}[thm]{Problem}

\theoremstyle{definition}

\renewcommand{\ge}{\geqslant}

\newcommand{\ba}{\mathbf{a}}

\newcommand{\bp}{\mathbf{p}}
\newcommand{\bq}{\mathbf{q}}

\newcommand{\bt}{\mathbf{t}}
\newcommand{\bu}{\mathbf{u}}
\newcommand{\bv}{\mathbf{v}}
\newcommand{\bw}{\mathbf{w}}

\begin{document}

\title[A Continuum in the Lattice of Semiring Varieties]
{A Continuum in the Lattice of Semiring Varieties: The Interval $[\mathsf{V}(S), \mathsf{V}(S^0)]$}

\author{Zidong Gao}
\address{School of Mathematics, Northwest University, Xi'an, 710127, Shaanxi, P.R. China}
\email{zidonggao@yeah.net}

\author{Miaomiao Ren}
\address{School of Mathematics, Northwest University, Xi'an, 710127, Shaanxi, P.R. China}
\email{miaomiaoren@yeah.net}

\author{Xianzhong Zhao}
\address{School of Mathematics, Northwest University, Xi'an, 710127, Shaanxi, P.R. China}
\email{zhaoxz@nwu.edu.cn}

\subjclass[2010]{16Y60, 03C05, 08B15}
\keywords{Semiring, Variety, Identity, Finite basis problem}

\begin{abstract}
For an additively idempotent semiring (ai-semiring) $S$,
let $S^0$ denote the ai-semiring obtained from $S$ by adjoining a new element $0$.
In this paper,
we develop an approach to investigate the interval $[\mathsf{V}(S), \mathsf{V}(S^0)]$
of ai-semiring varieties between the variety generated by $S$ and that generated by $S^0$.
We establish a general sufficient condition under which this interval has the cardinality of the continuum.
This is applied in particular to $[\mathsf{V}(S_7), \mathsf{V}(S_7^0)]$,
where $S_7$ is a $3$-element ai-semiring and is a nonfinitely based algebra of the smallest possible order,
thereby resolving an open problem proposed by Jackson, Ren, and Zhao (J. Algebra \textbf{611} (2022), 211--245).
The same conclusion holds for $[\mathsf{V}(B_2^1), \mathsf{V}((B_2^1)^0)]$,
where $B_2^1$ is the ai-semiring whose multiplicative reduct is the $6$-element Brandt semigroup.
We also present a sufficient condition for the nonfinite basis property in ai-semiring varieties.
As a corollary, we obtain a new proof of Dolinka's theorem
(Internat. J. Algebra Comput. \textbf{17} (2007), no.~8, 1537--1551) that the $7$-element ai-semiring $(B_2^1)^0$
has no finite basis for its identities.
\end{abstract}

\maketitle

\section{Introduction and preliminaries}\label{sec:intro}

An \emph{additively idempotent semiring} (ai-semiring for short\footnote{The prefix ``ai'' stands for \emph{additively idempotent} and is unrelated to artificial intelligence.})
is an algebra $(S, +, \cdot)$ with two binary operations $+$ and $\cdot$
such that the additive reduct $(S, +)$ is a commutative idempotent semigroup,
the multiplicative reduct $(S, \cdot)$ is a semigroup and the distributive laws
\[
x(y+z)\approx xy+xz,\quad (x+y)z\approx xz+yz
\]
hold.
Such algebras are ubiquitous in mathematics and find applications in diverse areas such as
algebraic geometry~\cite{cc}, tropical geometry~\cite{ms}, information science~\cite{gl}, and theoretical computer science~\cite{go}.

Let $S$ be an ai-semiring. Then the relation $\leq$ on $S$ defined by
\[
a \leq b \Leftrightarrow a+b=b
\]
is a partial order that makes $(S, \leq)$ an upper semilattice, where the supremum of any two elements $a$ and $b$ is $a+b$.
Consequently, the additive reduct $(S, +)$ is uniquely determined by this semilattice order.
Therefore, it is often convenient to represent the addition operation using the Hasse diagram of $(S, \leq)$.
The order $\leq$ is readily seen to be compatible with multiplication,
which is why an ai-semiring is also called a \emph{semilattice-ordered semigroup}.

Let $\mathcal{V}$ be a \emph{variety} of ai-semirings, that is, a class of ai-semirings
that is closed under taking subalgebras, homomorphic images and arbitrary direct products.
By Birkhoff's celebrated theorem, $\mathcal{V}$ is an \emph{equational class},
that is, the class of all ai-semirings that satisfy some set of identities.
The variety $\mathcal{V}$ is \emph{finitely based} if it can be defined by a finite set of identities;
otherwise, it is \emph{nonfinitely based}.
An ai-semiring $S$ is finitely based (or nonfinitely based) if the variety $\mathsf{V}(S)$ it generates possesses this property.

The finite basis problem for a class of ai-semirings concerns the classification of
its members according to whether they are finitely based.
Over the past two decades, this problem has been extensively investigated,
leading to substantial progress. For a detailed overview of results and developments,
see \cite{dol07, dol09, dol092, dol093, dgv, gjrz, gpz05, jrz, pas05, rlzc, rjzl, rlyc, rz16, rzs20, rzw, sr, shap23, vol21, wrz, wzr, yrzs, zrc, zw}.
In particular,
Pastijn et al.~\cite{gpz05, pas05} established that every ai-semiring satisfying the identity $x^2\approx x$ is finitely based.
This result was extended by Ren et al.~\cite{rz16, rzw}, who proved the same for all ai-semirings satisfying
the identity $x^3 \approx x$.
Furthermore, Ren et al.~\cite{rzs20} showed that if $n > 1$ is a square-free integer, then
every ai-semiring satisfying the identities $x^n \approx x$ and $xy\approx yx$ is also finitely based.

The finite basis problem for ai-semirings of small order has attracted considerable attention.
Dolinka~\cite{dol07} constructed the first example of a nonfinitely based finite ai-semiring,
which contains $7$ elements. For the smallest cases,
Shao and Ren~\cite{sr} proved that every ai-semiring in the variety generated by all $2$-element ai-semirings is finitely based. Subsequently, Zhao et al.~\cite{zw} showed that all ai-semirings of order three are finitely based,
with the possible exception of the semiring $S_7$; see Table~\ref{tb24111401} for its Cayley tables.

\begin{table}[ht]
\caption{The Cayley tables of $S_7$} \label{tb24111401}
\begin{tabular}{c|ccc}
$+$      &$0$&$a$&$1$\\
\hline
$0$ &$0$&$0$&$0$\\
$a$      &$0$&$a$&$0$\\
$1$      &$0$&$0$&$1$\\
\end{tabular}\qquad
\begin{tabular}{c|ccc}
$\cdot$  &$0$&$a$&$1$\\
\hline
$0$      &$0$&$0$&$0$\\
$a$      &$0$&$0$&$a$\\
$1$      &$0$&$a$&$1$\\
\end{tabular}
\end{table}

Jackson et al.~\cite{jrz} later settled this remaining case by establishing that $S_7$ itself is nonfinitely based--a corollary of a more general result which also showed that $S_7$ can transmit this property to other finite ai-semirings.
Examples include certain finite flat semirings contained in $\mathsf{V}(S_7)$ and some other finite ai-semirings whose varieties contain $S_7$, such as finite flat semirings, and the ai-semiring $B^1_2$ whose multiplicative reduct is the $6$-element Brandt monoid.
The elements of $B^1_2$ are the matrices
\[
0=\begin{bmatrix}
0 & 0 \\
0 & 0
\end{bmatrix}, \quad
1=\begin{bmatrix}
1 & 0 \\
0 & 1
\end{bmatrix}, \quad
a=\begin{bmatrix}
0 & 1 \\
0 & 0
\end{bmatrix}, \quad
b=\begin{bmatrix}
0 & 0 \\
1 & 0
\end{bmatrix},
\]
\[
ab=
\begin{bmatrix}
1 & 0 \\
0 & 0
\end{bmatrix},\quad
ba=
\begin{bmatrix}
0 & 0 \\
0 & 1
\end{bmatrix},
\]
with multiplication given by matrix multiplication.
Its additive structure is determined by the Hasse diagram in Figure~\ref{fig1}.
One can readily verify that that $B^1_2$ contains a copy of $S_7$.
We note that Volkov~\cite{vol21} independently resolved the finite basis problem for $B^1_2$ using a different method.

\begin{figure}[h]
\centering
\scalebox{0.8}{%
\begin{tikzpicture}[
    node distance=1.5cm and 1.5cm,
    solidnode/.style={circle, draw=black, fill=black, inner sep=2pt, minimum size=4pt},
    hollownode/.style={circle, draw=black, fill=white, inner sep=2pt, minimum size=4pt}
]

\node[solidnode, label=below:1] (1) at (0,0.5) {};

\node[hollownode, label=left:{$a$}] (a) at (-2,2) {};
\node[solidnode, label=below left:{$ab$}] (ab) at (-0.8,2) {};
\node[solidnode, label=below right:{$ba$}] (ba) at (0.8,2) {};
\node[hollownode, label=right:{$b$}] (b) at (2,2) {};

\node[solidnode, label=above:0] (0) at (0,3.5) {};

\draw (1) -- (ab);
\draw (1) -- (ba);

\draw (a) -- (0);
\draw (ab) -- (0);
\draw (ba) -- (0);
\draw (b) -- (0);

\end{tikzpicture}%
}
\caption{The additive order of $B_2^1$}
\label{fig1}
\end{figure}

These results motivated Gao et al.~\cite{gjrz} to systematically investigate the finite basis problem for ai-semirings within $\mathsf{V}(S_7)$.
Their work revealed that although $\mathsf{V}(S_7)$ contains $2^{\aleph_0}$ subvarieties,
only $6$ of them are finitely based. Despite this progress, the following fundamental problem remains open:
\begin{problem}\label{problem25112101}
Is every finite ai-semiring whose variety contains $S_7$ nonfinitely based$?$
\end{problem}

Let $S$ be an ai-semiring.
One can construct an ai-semiring $S^0$ from $S$ by adjoining a new element $0$.
The operations on $S^0=S \cup \{0\}$ are defined by:
\[
(\forall a\in S\cup \{0\}) \quad a+0=0+a=a,\quad a0=0a=0,
\]
while preserving the original operations on $S$.
Then $S$ is a subsemiring of $S^0$, and the element
$0$ is both the additive least element and the multiplicative zero of $S^0$.
In particular,
if $S$ is a trivial ai-semiring, then $S^0$ is a $2$-element distributive lattice
and is denoted by $D_2$.
By \cite[Proposition 1.4]{wrz}, $D_2$ is contained in every variety of the form $\mathsf{V}(S^0)$.
The fundamental construction $S^0$ has played a crucial role in the study of
the finite basis problem for ai-semirings,
see, for example,~\cite{gpz05, pas05, rz16, rzs20, rzw}.
Jackson et al.~\cite[Problem 7.4]{jrz} proposed the following problem:
\begin{problem}\label{prob123}
\hspace*{\fill}
\begin{enumerate}[$(1)$]
\item Is $S_7^0$ finitely based or not finitely based$?$
\item What is the cardinality of the interval $[\mathsf{V}(S_7),\mathsf{V}(S_7^0)]$ in the lattice of semiring varieties$?$
\item Under what conditions is the $($non$)$finite basis property preserved when passing from an ai-semiring $S$ to $S^0$$?$
\end{enumerate}
\end{problem}

Motivated by Problem~\ref{prob123}(2), it is natural to consider the following more general problem:
\begin{problem}\label{prob1123}
Let $S$ be an ai-semiring. What is the cardinality of the interval $[\mathsf{V}(S),\mathsf{V}(S^0)]$ in the lattice of semiring varieties$?$
\end{problem}

Let $\mathcal{V}_1$ and $\mathcal{V}_2$ be ai-semiring varieties with $\mathcal{V}_1\subseteq\mathcal{V}_2$.
Then the interval $[\mathcal{V}_1, \mathcal{V}_2]$ denotes the class of all varieties $\mathcal{V}$ such that
$\mathcal{V}_1\subseteq \mathcal{V} \subseteq \mathcal{V}_2$.
Problem~\ref{prob123} (1) constitutes a special case of the more general Problem~\ref{problem25112101}. Wu et al.~\cite{wrz} employed a syntactic approach to prove that every variety in the interval $[\mathsf{V}(S_7, D_2), \mathsf{V}(S_7^0)]$ is nonfinitely based,
where $\mathsf{V}(S_7, D_2)$ denotes the variety generated by $S_7$ and $D_2$.
This result not only resolves Problem~\ref{prob123} (1) by establishing the nonfinite basis property for $S_7^0$ itself but also provides a partial solution to Problem~\ref{problem25112101}.

In a related development, Gao et al.~\cite{gjrz} obtained a sufficient condition for an ai-semiring variety containing $S_7$ to be nonfinitely based.
As applications,
they showed that the entire interval $[\mathsf{V}(S_7), \mathsf{V}(S_7^0)]$
consists of nonfinitely based varieties and established the same property for certain $4$-element ai-semirings whose varieties contain $S_7$.
Gusev and Volkov~\cite{gv2301, gv2302, gv2501} identified mild conditions that guarantee an ai-semiring variety containing $B_2^1$ is nonfinitely based.
The $7$-element ai-semiring $(B_2^1)^0$, denoted by $\Sigma_7$ in Dolinka's paper~\cite{dol07},
was shown to be nonfinitely based.
Moreover, Dolinka~\cite{dol09} proved that every locally finite variety containing
the semiring of $2\times 2$ matrices over $D_2$ is nonfinitely based.
This matrix semiring contains a copy of $(B_2^1)^0$.
Cumulatively, these findings strongly suggest an affirmative answer to Problem~\ref{problem25112101}.

The references~\cite{gpz05, pas05, rz16, rzs20, rzw} contain examples of finite intervals $[\mathsf{V}(S),\mathsf{V}(S^0)]$,
but no examples of such intervals containing infinitely many varieties were previously known.
In Section 2, we develop a general approach to studying such intervals.
Building on this, Section 3 establishes a sufficient condition under which the interval $[\mathsf{V}(S),\mathsf{V}(S^0)]$
has cardinality $2^{\aleph_0}$.
As an application, we prove that $[\mathsf{V}(S), \mathsf{V}(S^0)]$ has this property whenever
$\mathsf{V}(S) \in [\mathsf{V}(S_7), \mathsf{V}(B^1_2)]$. In particular, both $[\mathsf{V}(S_7), \mathsf{V}(S^0_7)]$ and $[\mathsf{V}(B^1_2),\mathsf{V}((B^1_2)^0)]$ satisfy this condition.
These results not only resolve Problem~\ref{prob123} (2) but also advance the study of Problem~\ref{prob1123}.
Furthermore,
we show that there exist $2^{\aleph_0}$ pairwise distinct ai-semirings $S$
such that each interval $[\mathsf{V}(S), \mathsf{V}(S^0)]$ has
cardinality $2^{\aleph_0}$;
moreover, the varieties $\mathsf{V}(S)$ corresponding to these semirings are also pairwise distinct.

In Section 4, we present a sufficient condition for the nonfinite basis property of ai-semiring varieties.
As a consequence, we show that every variety in the interval $[\mathsf{V}(S_7), \mathsf{V}((B^1_2)^0)]$ is nonfinitely based,
and that for certain specific ai-semirings $S$, both $S$ and $S^0$ are nonfinitely based.
These results not only contributes directly to Problem~\ref{problem25112101} and Problem~\ref{prob123}(3),
but also provides a new proof of Dolinka's theorem that $(B_2^1)^0$ is nonfinitely based.

In the remainder of this section, we collect the necessary preliminaries for the subsequent sections.
A \emph{flat semiring} is an ai-semiring whose additive reduct is
a semilattice of height $1$ with the top element being the multiplicative zero.
Equivalently, an ai-semiring is a flat semiring if
its multiplicative reduct has a zero element $0$
and satisfies $a+b=0$ for all distinct elements $a, b\in S$.
Such algebras have been instrumental in advancing the finite basis problem for ai-semirings in recent years,
see~\cite{gjrz, jrz, rjzl, wzr}.
In this context, if $S$ is a flat semiring,
we denote by $\infty$ the top element of $S^0$ to distinguish it from the newly adjoined element.

Jackson et al.~\cite[Lemma 2.2]{jrz} observed that a semigroup $S$ with the zero element $0$
becomes a flat semiring with the top element $0$ if and only if it is
$0$-cancellative, that is, $ab=ac\neq0$ implies $b=c$ and $ba=ca\neq0$ implies $b=c$ for all $a, b, c\in S$.
Consequently, constructing a specific flat semiring reduces to specifying a $0$-cancellative semigroup.

The following algebras form an important class of flat semirings.
Let $W$ be a nonempty subset of a free commutative semigroup,
and let $S_c(W)$ denote the set of all nonempty subwords of words in
$W$ together with a new symbol $0$. Define a binary operation $\cdot$ on $S_c(W)$ by the rule
\[
\bu\cdot \bv=
\begin{cases}
\bu\bv,& \text{if }~\bu\bv\in S_c(W)\setminus \{0\}, \\
0,& \text{otherwise.}
\end{cases}
\]
Then $(S_c(W), \cdot)$ forms a commutative semigroup with zero element $0$.
It is easy to check that $(S_c(W), \cdot)$ is $0$-cancellative and so $S_c(W)$ becomes a flat semiring.
In particular, if~$W$ consists of a single word $\bw$ we shall write $S_c(W)$ as $S_c(\bw)$.
If we allow the empty word in this construction, then the semigroup reduct is a monoid.
The corresponding flat semiring is denoted by $M_c(W)$.
If $a$ is a letter, then $M_c(a)$ is isomorphic to $S_7$.
For any natural number $k\geq 1$, let $a_1\cdots a_k$ denote a linear word of length $k$.
The following proposition summarizes some properties of the flat semiring $S_c(a_1\cdots a_k)$.
\begin{pro}\label{sca1ak}
For any natural number $k \geq 1$, $S_c(a_1\cdots a_k)$ belongs to $\mathsf{V}(S_7)$.
Moreover, $\mathsf{V}(S_c(a_1\cdots a_k))$ is properly contained in $\mathsf{V}(S_c(a_1\cdots a_{k+1}))$.
\end{pro}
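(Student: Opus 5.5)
For the first claim, we will embed $S_c(a_1\cdots a_k)$ into a direct power of $S_7$, or at least exhibit it as a homomorphic image of a subalgebra of such a power. Write $S_7=\{0,a,1\}$ with $0$ the top element, and note that $S_7\cong M_c(a)$. Elements of $S_c(a_1\cdots a_k)\setminus\{0\}$ are the nonempty subsets $U\subseteq\{1,\dots,k\}$, identified with the subword $\prod_{i\in U}a_i$. The product of $U$ and $V$ is $U\cup V$ if $U\cap V=\emptyset$, and $0$ otherwise.

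Send $U$ to the tuple $\varphi(U)=(u_1,\dots,u_k)\in S_7^k$, where $u_i=a$ if $i\in U$ and $u_i=1$ otherwise. Send $0$ to the constant tuple $(0,\dots,0)$. Then:
\begin{enumerate}[$(i)$]
\item For disjoint $U,V$, the coordinatewise product is $\varphi(U\cup V)$, since $a\cdot 1=1\cdot a=a$ and $1\cdot1=1$.
\item For overlapping $U,V$, some coordinate is $a\cdot a=0$.
\end{enumerate}
So the image of $\varphi$ is not closed in $S_7^k$, and we must collapse. Take $T$ to be the subsemiring of $S_7^k$ generated by the $\varphi(U)$. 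Tuples containing a $0$ coordinate, and also sums of distinct $\varphi(U)$ (which create $0$ coordinates because $a+1=0$), are the tuples we want to identify with $0$.

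Let $I\subseteq T$ be the set of tuples having at least one coordinate $0$. We check two things:
\begin{enumerate}[$(i)$]
\item $I$ is an ideal that is closed upward in the order, since $0$ is absorbing for both operations in $S_7$.
\item $T\setminus I$ is exactly the image of $\varphi$ restricted to nonempty $U$. Indeed, products of generators either stay of the form $\varphi(U\cup V)$ or acquire a $0$, and sums of distinct generators acquire a $0$.
\end{enumerate}
The Rees-type congruence collapsing $I$ is a semiring congruence, because $I$ is a multiplicative ideal and $x+i\in I$ for $i\in I$. The quotient $T/I$ is then isomorphic to $S_c(a_1\cdots a_k)$, which gives the first claim via HSP. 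This step is routine. The only care needed is the empty subword, which is excluded since the all-$1$ tuple is not a generator.

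For the second claim, the containment $\mathsf V(S_c(a_1\cdots a_k))\subseteq \mathsf V(S_c(a_1\cdots a_{k+1}))$ holds because $S_c(a_1\cdots a_k)$ is a subsemiring of $S_c(a_1\cdots a_{k+1})$: it consists of the subwords avoiding $a_{k+1}$, together with $0$, and this set is closed under both operations.

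Properness is the main obstacle, since it requires an identity separating the two. We plan to use
\[
x_1x_2\cdots x_{k+1}\approx x_1x_2\cdots x_{k+1}+ y,
\]
or a cleaner variant such as $x_1\cdots x_{k+1}\approx (x_1\cdots x_{k+1})^{\,}\!+x_1\cdots x_{k+1}z$, or simply an identity saying that every product of $k+1$ elements is the top element $0$.

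In $S_c(a_1\cdots a_k)$, any product of $k+1$ elements is $0$. Each nonzero element has length at least $1$, so a nonzero product would have length at least $k+1>k$, which is impossible. Hence the identity $x_1\cdots x_{k+1}\approx x_1\cdots x_{k+1}+y$ holds there, since $0$ is the top element and $0+y=0$. In $S_c(a_1\cdots a_{k+1})$ the assignment $x_i\mapsto a_i$, $y\mapsto a_1$ makes the left side $a_1\cdots a_{k+1}\neq 0$, while the right side is $a_1\cdots a_{k+1}+a_1=0$. So the identity fails there, and the inclusion is proper.
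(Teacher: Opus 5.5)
Your proposal is correct, but it takes a different route from the paper. The paper gives no argument: it reads the statement off the diagram of subvarieties of $\mathsf{V}(S_7)$ in \cite[Figure 1]{gjrz}. You prove it from scratch in three steps.
\begin{enumerate}[$(1)$]
\item You realize $S_c(a_1\cdots a_k)$ as the Rees-type quotient $T/I$ of the subsemiring $T$ of $S_7^k$ generated by the tuples $\varphi(U)$. The key check is that an element of $T$ with no $0$-coordinate must be a single $\varphi(U)$. This holds because a product of generators avoids $0$ exactly when their supports are pairwise disjoint. It also holds because a sum of $\{a,1\}$-tuples avoids $0$ exactly when all summands coincide, since $a+1=0$.
\item You obtain the inclusion from the evident subsemiring $S_c(a_1\cdots a_k)$ of $S_c(a_1\cdots a_{k+1})$.
\item You separate the two varieties by the $(k+1)$-nilpotency identity $x_1\cdots x_{k+1}\approx x_1\cdots x_{k+1}+y$.
\end{enumerate}
The citation is shorter and places the chain inside the known lattice $\mathcal{L}(\mathsf{V}(S_7))$. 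However, it makes the paper depend on \cite{gjrz} for what is an elementary fact. Your route is self-contained. Its separating identity rests on the same nilpotency phenomenon the paper itself uses in Lemma~\ref{sca1ak0}. In a write-up, delete the garbled ``cleaner variant'' and commit to the identity you actually verify.
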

\begin{proof}
This statement is immediate from \cite[Figure 1]{gjrz}.
\end{proof}

Let $X^+$ denote the free semigroup over a countably infinite set $X$ of variables.
By distributivity, all ai-semiring terms over $X$ can be expressed as a finite sum of words from words in $X^+$.
An \emph{ai-semiring identity} over $X$ is an
expression of the form
\[
\bu\approx \bv,
\]
where $\bu$ and $\bv$ are ai-semiring terms over $X$.
From \cite[Theorem 2.5]{kp} we know that the ai-semiring
$(P_f(X^+), \cup, \cdot)$ consisting of all non-empty finite subsets of $X^+$
is free in the variety of all ai-semirings.
So we sometimes write
\[
\{\bu_i \mid 1 \leq i \leq k\}\approx \{\bv_j \mid 1 \leq j \leq \ell\}
\]
for the ai-semiring identity
\[
\bu_1+\cdots+\bu_k\approx \bv_1+\cdots+\bv_\ell.
\]
Let $\bu\approx \bv$ be an ai-semiring identity over an alphabet $\{x_1, x_2, \ldots, x_n\}$.
We say that an ai-semiring \emph{$S$ satisfies $\bu\approx \bv$} or \emph{$\bu\approx \bv$ holds in $S$},
if $\varphi(\bu)=\varphi(\bv)$ for all semiring homomorphisms $\varphi: P_f(X^+)\to S$,
that is, $\bu(a_1, a_2, \ldots, a_n)=\bv(a_1, a_2, \ldots, a_n)$ for all $a_1, a_2, \ldots, a_n\in S$.

For an ai-semiring term $\mathbf{u}$, we write $\mathbf{u} \approx 0$ in Section~3
to denote the identities $\mathbf{u} x \approx x \mathbf{u} \approx \mathbf{u}$, where $x$ is a variable not occurring in $\mathbf{u}$.
If a flat semiring $S$ satisfies $\mathbf{u} \approx 0$,
then $S$ also satisfies $\mathbf{u}\approx \mathbf{u}+\mathbf{q}$ for every word $\mathbf{q}$,
since the multiplicative zero of $S$ is also its additive top element.

Let $\mathbf{u} \approx \mathbf{v}$ be an ai-semiring identity such that
\[
\mathbf{u} = \mathbf{u}_1 + \dots + \mathbf{u}_k, \quad \mathbf{v} = \mathbf{v}_1 + \dots + \mathbf{v}_\ell,
\]
where $\mathbf{u}_i, \mathbf{v}_j \in X^+$ for $1 \leq i \leq k$ and $1 \leq j \leq \ell$.
Then the ai-semiring variety defined by $\mathbf{u} \approx \mathbf{v}$
coincides with the variety defined by the simpler identities
\[
\mathbf{u} \approx \mathbf{u} + \mathbf{v}_j, \quad \mathbf{v} \approx \mathbf{v} + \mathbf{u}_i
\]
for all $1 \leq i \leq k$ and $1 \leq j \leq \ell$.
Therefore, it is both necessary and convenient to work with identities of the form
$\mathbf{u} \approx \mathbf{u} + \mathbf{q}$, where $\mathbf{q}$ is a word.

Let $\bp$ be a word in $X^+$, and let $x$ be a letter in $X$. Then
\begin{itemize}
\item $c(\bp)$ denotes the set of all variables that occur in $\bp$;

\item $occ(x, \bp)$ denotes the number of occurrences of $x$ in $\bp$.
\end{itemize}
For an ai-semiring term $\bu=\bu_1+\cdots+\bu_n$ with each $\bu_i \in X^+$,
we shall use $c(\bu)$ to denote the set of all variables that occur in $\bu$, that is,
\[
c(\bu)=\bigcup_{1\leq i \leq n}c(\bu_i).
\]

For other notions and terminology used in this paper, we refer the reader to
Jackson et al.~\cite{jrz} and Ren et al.~\cite{rjzl} for semiring theory,
and to Burris and Sankappanavar~\cite{bs} for universal algebra.
We shall assume that the reader is familiar with the basic results in these areas.

\section{From semiring $S$ to $S^0$}
In this section, we develop an approach to study the interval $[\mathsf{V}(S),\mathsf{V}(S^0)]$.
For a class $\{S_i \mid i\in I\}$ of ai-semirings,
we denote by $\mathsf{V}(\{S_i \mid i\in I\})$ the variety it generates.
If $\mathcal{V}$ is an ai-semiring variety, then $\mathcal{L}(\mathcal{V})$
denotes the lattice of subvarieties of $\mathcal{V}$.

Let $\mathbf{u}=\mathbf{u}_1+\mathbf{u}_2+\cdots+\mathbf{u}_n$ be an ai-semiring term,
where $\mathbf{u}_i \in X^+$ for $1\leq i \leq n$.
For any word $\mathbf{q}$, let $D_{\mathbf{q}}(\mathbf{u})$ denote the set
$\{\mathbf{u}_i \in \mathbf{u} \mid c(\mathbf{u}_i) \subseteq c(\mathbf{q})\}$.
The following result, which appears as \cite[Proposition 1.5]{wrz},
establishes a fundamental connection between the equational theories of $S^0$ and $S$.

\begin{proposition}\label{s0id}
Let $S$ be an ai-semiring, and let $\mathbf{u} \approx \mathbf{u} + \mathbf{q}$ be an ai-semiring identity
such that $\mathbf{u}=\mathbf{u}_1+\cdots+\mathbf{u}_n$, where $\mathbf{u}_i, \mathbf{q}\in X^+$, $1\leq i \leq n$.
Then the identity $\mathbf{u} \approx \mathbf{u} + \mathbf{q}$ holds in $S^0$
if and only if
$D_{\mathbf{q}}(\mathbf{u})$ is nonempty and the identity
$D_{\mathbf{q}}(\mathbf{u}) \approx D_{\mathbf{q}}(\mathbf{u}) + \mathbf{q}$
is satisfied by $S$.
In particular, if $c(\bu)\subseteq c(\bq)$, then
$\mathbf{u} \approx \mathbf{u} + \mathbf{q}$ holds in $S^0$
if and only if it holds in $S$.
\end{proposition}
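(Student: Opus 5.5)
We prove the two directions separately by evaluating both sides under assignments into $S^0$ and sorting them by which variables are sent to the new element $0$. The basic observation, from the definition of $S^0$, is this. Let $\varphi$ be an assignment of the variables into $S^0$, and let $Z=\{x \mid \varphi(x)=0\}$. Then a word $\mathbf{w}$ evaluates to $0$ exactly when $c(\mathbf{w})\cap Z\neq\emptyset$, because $0$ is a multiplicative zero and $S$ is closed under multiplication. Otherwise $\mathbf{w}$ evaluates to its value in $S$ under the restricted assignment. Since $0$ is the additive identity, $\varphi(\mathbf{u})$ is the sum in $S$ of the values of those $\mathbf{u}_i$ with $c(\mathbf{u}_i)\cap Z=\emptyset$, and it equals $0$ if there are none.

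\emph{Sufficiency.} Assume $D_{\mathbf{q}}(\mathbf{u})\neq\emptyset$ and that $S$ satisfies $D_{\mathbf{q}}(\mathbf{u})\approx D_{\mathbf{q}}(\mathbf{u})+\mathbf{q}$. Take any assignment $\varphi$ into $S^0$ and split into two cases.
\begin{enumerate}[$(1)$]
\item If $c(\mathbf{q})\cap Z\neq\emptyset$, then $\varphi(\mathbf{q})=0$. Hence $\varphi(\mathbf{u}+\mathbf{q})=\varphi(\mathbf{u})+0=\varphi(\mathbf{u})$.
\item If $c(\mathbf{q})\cap Z=\emptyset$, then every word of $D_{\mathbf{q}}(\mathbf{u})$ avoids $Z$, so these words take values in $S$. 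Modifying $\varphi$ on the variables outside $c(\mathbf{q})$ does not change $\varphi(D_{\mathbf{q}}(\mathbf{u}))$ or $\varphi(\mathbf{q})$, so the hypothesis gives $\varphi(D_{\mathbf{q}}(\mathbf{u}))=\varphi(D_{\mathbf{q}}(\mathbf{u}))+\varphi(\mathbf{q})$. Adding the values of the remaining summands of $\mathbf{u}$ to both sides yields $\varphi(\mathbf{u})=\varphi(\mathbf{u})+\varphi(\mathbf{q})$.
\end{enumerate}

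\emph{Necessity.} Assume $S^0$ satisfies $\mathbf{u}\approx\mathbf{u}+\mathbf{q}$. Choose $\varphi$ sending every variable outside $c(\mathbf{q})$ to $0$ and the variables of $c(\mathbf{q})$ to arbitrary elements of $S$.
\begin{enumerate}[$(1)$]
\item Under this $\varphi$, exactly the words outside $D_{\mathbf{q}}(\mathbf{u})$ vanish, and $\varphi(\mathbf{q})\in S$.
\item If $D_{\mathbf{q}}(\mathbf{u})$ were empty, we would get $\varphi(\mathbf{u})=0$ while $\varphi(\mathbf{u}+\mathbf{q})=\varphi(\mathbf{q})\neq 0$, a contradiction. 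This step needs $S\neq\emptyset$, which holds for any semiring.
\item Hence $D_{\mathbf{q}}(\mathbf{u})\neq\emptyset$, and the equality reads $\varphi(D_{\mathbf{q}}(\mathbf{u}))=\varphi(D_{\mathbf{q}}(\mathbf{u}))+\varphi(\mathbf{q})$ for all $S$-valued assignments to $c(\mathbf{q})$. Since $c(D_{\mathbf{q}}(\mathbf{u}))\subseteq c(\mathbf{q})$, this is exactly satisfaction of the identity in $S$.
\end{enumerate}

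\emph{The particular case.} If $c(\mathbf{u})\subseteq c(\mathbf{q})$, then $D_{\mathbf{q}}(\mathbf{u})=\mathbf{u}$, which is nonempty, and the criterion reduces to satisfaction of $\mathbf{u}\approx\mathbf{u}+\mathbf{q}$ in $S$.

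The only delicate points are bookkeeping. One has to check that variables outside $c(\mathbf{q})$ play no role in case $(2)$ of sufficiency. One also has to handle the degenerate situation where the surviving sum is empty, which is exactly why the nonemptiness condition appears in the statement.
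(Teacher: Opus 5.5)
Your proof is correct and complete. The key observation is that under an assignment into $S^0$ a word evaluates to the adjoined $0$ exactly when it contains a variable sent to $0$, and otherwise to its value in $S$. This handles both directions. In the necessity direction, sending precisely the variables outside $c(\mathbf{q})$ to $0$ is the right choice: it isolates $D_{\mathbf{q}}(\mathbf{u})$ and forces it to be nonempty.

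The paper gives no argument of its own for this statement; it imports it as \cite[Proposition 1.5]{wrz}. So there is no in-paper proof to compare against, and your direct case analysis is a self-contained substitute for that citation.
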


Proposition~\ref{s0id} yields the following two corollaries.
\begin{corollary}\label{s0t0}
Let $S$ and $T$ be ai-semirings.
If $S\in \mathsf{V}(T)$, then $S^0\in \mathsf{V}(T^0)$.
\end{corollary}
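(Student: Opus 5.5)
To show $S^0\in\mathsf{V}(T^0)$, we show that every identity satisfied by $T^0$ is also satisfied by $S^0$. As noted in Section~1, every ai-semiring identity is equivalent to a finite set of identities of the form $\bu\approx\bu+\bq$, where $\bu=\bu_1+\cdots+\bu_n$ with $\bu_i\in X^+$ and $\bq\in X^+$. So it suffices to treat identities of this shape.

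Suppose, then, that $T^0$ satisfies $\bu\approx\bu+\bq$. By Proposition~\ref{s0id} applied to $T$, the set $D_{\bq}(\bu)$ is nonempty and $T$ satisfies
\[
D_{\bq}(\bu)\approx D_{\bq}(\bu)+\bq.
\]
Since $S\in\mathsf{V}(T)$, every identity of $T$ holds in $S$, so $S$ satisfies this identity as well. Note that the set $D_{\bq}(\bu)$ depends only on the syntactic data $\bu$ and $\bq$, not on the semiring, so it is the same set whether we reason about $S$ or about $T$.

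Applying Proposition~\ref{s0id} now in the converse direction, this time to $S$, we conclude that $S^0$ satisfies $\bu\approx\bu+\bq$. Hence every identity of $T^0$ holds in $S^0$, and by Birkhoff's theorem $S^0\in\mathsf{V}(T^0)$.

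The argument is a direct application of Proposition~\ref{s0id} in both directions; the only step needing attention is the reduction to identities of the form $\bu\approx\bu+\bq$, which is already justified in Section~1.
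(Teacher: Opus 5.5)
Your proof is correct and is exactly the derivation the paper intends when it says that Proposition~\ref{s0id} yields this corollary. You apply the proposition to $T$ to get $D_{\bq}(\bu)\neq\emptyset$ and the identity $D_{\bq}(\bu)\approx D_{\bq}(\bu)+\bq$ in $T$, transfer that identity to $S$, and then apply the proposition to $S$ in the converse direction; the reduction to identities of the form $\bu\approx\bu+\bq$ and the final appeal to Birkhoff's theorem are handled correctly.
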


\begin{corollary}\label{a0b0}
Let $S$ be an ai-semiring, and let $\{T_i\mid i\in I\}$ be a class of ai-semirings.
If $\mathsf{V}(S)=\mathsf{V}(\{T_i\mid i\in I\})$, then $\mathsf{V}(S^0)=\mathsf{V}(\{T_i^0\mid i\in I\})$.
\end{corollary}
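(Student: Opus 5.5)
We prove Corollary~\ref{a0b0} by comparing equational theories: $\mathsf{V}(S^0)$ and $\mathsf{V}(\{T_i^0\mid i\in I\})$ coincide exactly when $S^0$ and the family $\{T_i^0\}$ satisfy the same ai-semiring identities. As noted in Section~1, every identity is equivalent to a finite set of identities of the form $\mathbf{u}\approx\mathbf{u}+\mathbf{q}$ with $\mathbf{u}$ a sum of words and $\mathbf{q}$ a word. So it suffices to show that such an identity holds in $S^0$ if and only if it holds in every $T_i^0$.

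Fix $\mathbf{u}=\mathbf{u}_1+\cdots+\mathbf{u}_n$ and a word $\mathbf{q}$. By Proposition~\ref{s0id}, $\mathbf{u}\approx\mathbf{u}+\mathbf{q}$ holds in $S^0$ if and only if $D_{\mathbf{q}}(\mathbf{u})\neq\emptyset$ and $S$ satisfies $D_{\mathbf{q}}(\mathbf{u})\approx D_{\mathbf{q}}(\mathbf{u})+\mathbf{q}$. Applying the same proposition to each $T_i$, the identity holds in all $T_i^0$ if and only if $D_{\mathbf{q}}(\mathbf{u})\neq\emptyset$ and every $T_i$ satisfies $D_{\mathbf{q}}(\mathbf{u})\approx D_{\mathbf{q}}(\mathbf{u})+\mathbf{q}$.

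The nonemptiness condition depends only on $\mathbf{u}$ and $\mathbf{q}$, so it is the same on both sides. For the second condition, the hypothesis $\mathsf{V}(S)=\mathsf{V}(\{T_i\mid i\in I\})$ means precisely that $S$ satisfies an identity if and only if every $T_i$ does. Applying this to the identity $D_{\mathbf{q}}(\mathbf{u})\approx D_{\mathbf{q}}(\mathbf{u})+\mathbf{q}$ shows the second conditions agree as well.

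Hence $S^0$ and $\{T_i^0\}$ have the same equational theory, and by Birkhoff's theorem they generate the same variety. The argument has no real obstacle. The only point needing care is the reduction to identities of the form $\mathbf{u}\approx\mathbf{u}+\mathbf{q}$, and that reduction was already established in Section~1.
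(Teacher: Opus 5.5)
Your proof is correct and follows the paper's route: the paper derives this corollary directly from Proposition~\ref{s0id}, reducing to identities $\mathbf{u}\approx\mathbf{u}+\mathbf{q}$ and comparing the conditions $D_{\mathbf{q}}(\mathbf{u})\neq\emptyset$ and $D_{\mathbf{q}}(\mathbf{u})\approx D_{\mathbf{q}}(\mathbf{u})+\mathbf{q}$ on both sides, exactly as you do. One tacit point: combining the nonemptiness condition over all $i$ uses $I\neq\emptyset$, and the statement itself needs this too, since for $I=\emptyset$ the class $\{T_i\}$ generates the trivial variety, forcing $S$ trivial, while $S^0\cong D_2$ is not.
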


For any ai-semiring variety $\mathcal{V}$,
we denote by $\mathcal{V}^0$ the variety generated by the class $\{S^0\mid S\in \mathcal{V}\}$.
It is easy to see that $\mathcal{V}$ is a subvariety of $\mathcal{V}^0$.
If $\mathcal{V}=\mathsf{V}(S)$ for some ai-semiring $S$, then by Corollary \ref{a0b0}, $\mathcal{V}^0=\mathsf{V}(S^0)$.

\begin{corollary}\label{ss0uuq}
Let $\mathcal{V}$ be an ai-semiring variety, and
let $\mathbf{u} \approx \mathbf{u} + \mathbf{q}$ be an ai-semiring identity
such that $\mathbf{u}$ is an ai-semiring term and $\mathbf{q}$ is a word.
If $c(\bu)\subseteq c(\bq)$,
then $\mathcal{V}^0$ satisfies $\bu\approx \bu+\bq$
if and only if $\mathcal{V}$ satisfies it.
\end{corollary}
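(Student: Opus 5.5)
The plan is to reduce the statement to Proposition~\ref{s0id}, applied separately to each member of a generating class of $\mathcal{V}$ and of $\mathcal{V}^0$. The first step is to record that $\mathcal{V}^0$ satisfies an identity exactly when every $S^0$ with $S\in\mathcal{V}$ satisfies it; this holds because $\mathcal{V}^0$ is generated by the class $\{S^0\mid S\in\mathcal{V}\}$. Likewise, $\mathcal{V}$ satisfies an identity exactly when every $S\in\mathcal{V}$ does.

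Next comes the forward direction. Suppose $\mathcal{V}^0$ satisfies $\bu\approx\bu+\bq$. Each $S\in\mathcal{V}$ is a subsemiring of $S^0\in\mathcal{V}^0$, so $S$ satisfies the identity, and therefore so does $\mathcal{V}$. This direction does not even use the hypothesis $c(\bu)\subseteq c(\bq)$; it is just the remark already made in the text that $\mathcal{V}\subseteq\mathcal{V}^0$.

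For the converse, suppose $\mathcal{V}$ satisfies $\bu\approx\bu+\bq$, and write $\bu=\bu_1+\cdots+\bu_n$ with each $\bu_i\in X^+$. The hypothesis $c(\bu)\subseteq c(\bq)$ gives $c(\bu_i)\subseteq c(\bq)$ for every $i$. Hence $D_{\bq}(\bu)=\bu$, which is nonempty. Now take any $S\in\mathcal{V}$. Since $S$ satisfies $\bu\approx\bu+\bq$, the ``in particular'' clause of Proposition~\ref{s0id} shows that $S^0$ satisfies it as well. As this holds for every generator $S^0$ of $\mathcal{V}^0$, the identity holds in $\mathcal{V}^0$.

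There is no real obstacle here. The only point needing care is that Proposition~\ref{s0id} is stated for a single semiring, so the argument must pass through the generating class rather than apply the proposition to the variety directly. The passage from generators to the variety is justified because the identities of a class coincide with those of the variety it generates.
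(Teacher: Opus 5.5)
Your proof is correct and takes the same route as the paper, which simply says the corollary is a consequence of Proposition~\ref{s0id}. Your argument fills in the omitted details: you apply the ``in particular'' clause to each generator $S^0$ with $S\in\mathcal{V}$, and you use $\mathcal{V}\subseteq\mathcal{V}^0$ for the other direction.
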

\begin{proof}
This is also a consequence of Proposition~\ref{s0id}.
\end{proof}

Let $S$ be an ai-semiring.
To characterize the interval $[\mathsf{V}(S),\mathsf{V}(S^0)]$,
it is natural to consider the following mapping
\begin{equation}\label{phi}
\varphi: \mathcal{L}(\mathsf{V}(S)) \to [\mathsf{V}(S),\mathsf{V}(S^0)], \quad \mathcal{V} \mapsto \mathcal{V}^0\vee \mathsf{V}(S),
\end{equation}
where $\mathcal{V}^0\vee \mathsf{V}(S)$ denotes the join of $\mathcal{V}^0$ and $\mathsf{V}(S)$,
that is, the smallest variety containing $\mathcal{V}^0$ and $\mathsf{V}(S)$.
In particular, if $\mathcal{V}=\mathsf{V}(T)$ for some ai-semiring $T$,
then $\mathcal{V}^0=\mathsf{V}(T^0)$ and so
\[
\varphi(\mathcal{V})=\mathcal{V}^0\vee \mathsf{V}(S)=\mathsf{V}(T^0)\vee \mathsf{V}(S)=\mathsf{V}(T^0, S).
\]
Corollary~\ref{s0t0} ensures that that $\varphi$ is well-defined.
Furthermore, one can readily verify that $\varphi$ is order-preserving,
and so the smallest variety in the image of $\varphi$ is $\mathsf{V}(S, D_2)$.

Now, for any subset $X$ of $\mathcal{L}(\mathsf{V}(S))$,
if the restriction of $\varphi$ to $X$ is injective,
then the interval $[\mathsf{V}(S),\mathsf{V}(S^0)]$ has cardinality at least $|X|$.
In particular, if $X$ is uncountable, so is $[\mathsf{V}(S),\mathsf{V}(S^0)]$.
This observation provides a method for determining the cardinality of the interval.

Let $\mathcal{V}$ and $\mathcal{W}$ be distinct subvarieties of $\mathsf{V}(S)$.
To show that $\varphi(\mathcal{V})$ and $\varphi(\mathcal{W})$ are also distinct,
it suffices to find an identity that holds in $S$ and is satisfied by one of $\mathcal{V}^0$ or $\mathcal{W}^0$ but not the other.
Consequently, our task reduces to finding sufficiently many subvarieties $\mathcal{V}$ of $\mathsf{V}(S)$
for which the corresponding varieties $\varphi(\mathcal{V})$ can be distinguished by identities that hold in $S$.

We now focus on the $3$-element ai-semiring $S_7$. A precise understanding of its identities is essential.
Let $\mathbf{u}=\mathbf{u}_1+\cdots+\mathbf{u}_n$ be an ai-semiring term, where $\mathbf{u}_i, \mathbf{q}\in X^+$, $1\leq i \leq n$.
Let $\delta(\bu)$ denote the set
\[
\{Z\subseteq c(\bu)\mid (\forall \bu_i\in \bu)\,c(\bu_i)\cap Z=\{x_i\}, occ(x_i,\bu_i)=1\}.
\]
The following proposition, which is \cite[Proposition 5.5]{jrz}, provides a solution of the equational problem of $S_7$:

\begin{proposition}
Let $\mathbf{u} \approx \mathbf{v}$ be an ai-semiring identity.
Then $\mathbf{u} \approx \mathbf{v}$ holds in $S_7$ if and only if $c(\mathbf{u}) = c(\mathbf{v})$ and
$\delta(\mathbf{u}) = \delta(\mathbf{v})$.
\end{proposition}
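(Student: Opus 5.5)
The plan is to compute directly the value of an arbitrary ai-semiring term in $S_7$ under an arbitrary assignment, and to show that this value depends only on $c(\bu)$ and $\delta(\bu)$. Both directions of the equivalence then follow at once.

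First I record the structure of $S_7$ from Table~\ref{tb24111401}. It is a flat semiring: $0$ is the multiplicative zero and the additive top, and $a+1=0$. Multiplicatively, $\{1,a,0\}$ is a monoid with identity $1$ and $a^2=0$.

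Fix an assignment $\theta\colon X\to S_7$ and a word $\bp$.
\begin{itemize}
\item If some variable of $c(\bp)$ is sent to $0$, then $\theta(\bp)=0$.
\item Otherwise let $Z=\{x\in c(\bp)\mid \theta(x)=a\}$. Since $1$ is the identity and $a^2=0$, we get $\theta(\bp)=1$ if $Z=\emptyset$, $\theta(\bp)=a$ if the letters of $Z$ occur exactly once in $\bp$ in total, and $\theta(\bp)=0$ otherwise.
\end{itemize}

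Now take a sum $\bu=\bu_1+\cdots+\bu_n$. Because $S_7$ is flat, $\theta(\bu)$ is the common value of the $\theta(\bu_i)$ if they all agree, and $0$ otherwise. Put $Z=\theta^{-1}(a)\cap c(\bu)$. This gives three cases:
\begin{itemize}
\item $\theta(\bu)=0$ if some variable of $c(\bu)$ is sent to $0$;
\item otherwise $\theta(\bu)=1$ iff $Z=\emptyset$;
\item otherwise $\theta(\bu)=a$ iff every $\bu_i$ contains exactly one occurrence of a letter of $Z$, i.e.\ $c(\bu_i)\cap Z=\{x_i\}$ with $occ(x_i,\bu_i)=1$, i.e.\ $Z\in\delta(\bu)$.
\end{itemize}
In all remaining cases $\theta(\bu)=0$. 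I expect this bookkeeping to be the only real step. The point to watch is that a variable sent to $a$ must occur once in total among its letters in $\bu_i$, not merely once per letter, which is exactly what the condition $c(\bu_i)\cap Z=\{x_i\}$, $occ(x_i,\bu_i)=1$ encodes.

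Sufficiency then follows. If $c(\bu)=c(\bv)$ and $\delta(\bu)=\delta(\bv)$, the case description above is identical for $\bu$ and $\bv$: the set $Z=\theta^{-1}(a)\cap c(\bu)$ is the same set for both, and the test $Z\in\delta(\cdot)$ gives the same answer. Hence $\theta(\bu)=\theta(\bv)$ for every $\theta$.

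For necessity, suppose $\bu\approx\bv$ holds in $S_7$.
\begin{itemize}
\item If some $x\in c(\bu)\setminus c(\bv)$ exists, send $x\mapsto 0$ and every other variable to $1$. Then $\theta(\bu)=0$ while $\theta(\bv)=1$, a contradiction, so $c(\bu)=c(\bv)$ by symmetry.
\item Given $Z\in\delta(\bu)$, send $Z\mapsto a$ and all other variables to $1$. Then $\theta(\bu)=a$, so $\theta(\bv)=a$. Since $Z\subseteq c(\bu)=c(\bv)$, the computation above gives $Z\in\delta(\bv)$. By symmetry, $\delta(\bu)=\delta(\bv)$.
\end{itemize}
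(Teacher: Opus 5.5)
Your proof is correct and complete. The paper gives no argument of its own for this statement; it simply imports it from \cite[Proposition 5.5]{jrz}. Your direct evaluation is therefore a self-contained alternative rather than a reconstruction of the paper's route.

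\textbf{Sufficiency.} You show that for any assignment $\theta$, the value $\theta(\bu)\in S_7$ is determined by three things: whether $c(\bu)$ meets $\theta^{-1}(0)$, the set $Z=\theta^{-1}(a)\cap c(\bu)$, and whether $Z\in\delta(\bu)$. The three cases cannot overlap, since $\emptyset\notin\delta(\bu)$. So sufficiency follows immediately.

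\textbf{Necessity.} Your two test assignments are exactly the right ones. The first sends $x\mapsto 0$ and everything else to $1$. The second sends $Z\mapsto a$ and everything else to $1$. You correctly establish $c(\bu)=c(\bv)$ first, so that $\theta^{-1}(a)\cap c(\bv)=Z$ and the case analysis applies to $\bv$ as well.

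\textbf{Comparison.} The citation keeps the paper short. Your argument makes the result independent of \cite{jrz}: it uses only that $S_7$ is flat, that $1$ is a multiplicative identity, and that $a^2=0$.
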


As a corollary, we have

\begin{corollary}\label{s7uuq}
Let $\mathbf{u}$ be an ai-semiring term, and let $\mathbf{q}$ be word.
If $\delta(\mathbf{u})$ is empty and $c(\mathbf{q}) \subseteq c(\mathbf{u})$,
then $\mathbf{u} \approx \mathbf{u}+\mathbf{q}$ is satisfied by $S_7$.
\end{corollary}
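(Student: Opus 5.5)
The plan is to apply the equational characterization of $S_7$ stated just above: an identity $\mathbf{u}\approx\mathbf{v}$ holds in $S_7$ if and only if $c(\mathbf{u})=c(\mathbf{v})$ and $\delta(\mathbf{u})=\delta(\mathbf{v})$. We take $\mathbf{v}=\mathbf{u}+\mathbf{q}$ and check these two conditions in turn.

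First, the content condition. Since $c(\mathbf{q})\subseteq c(\mathbf{u})$ by hypothesis, we have
\[
c(\mathbf{u}+\mathbf{q})=c(\mathbf{u})\cup c(\mathbf{q})=c(\mathbf{u}).
\]

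Second, the $\delta$ condition. We show that $\delta(\mathbf{u}+\mathbf{q})$ is empty; since $\delta(\mathbf{u})=\emptyset$ by hypothesis, this gives $\delta(\mathbf{u})=\delta(\mathbf{u}+\mathbf{q})$. Suppose, for a contradiction, that $Z\in\delta(\mathbf{u}+\mathbf{q})$. By definition, $Z\subseteq c(\mathbf{u}+\mathbf{q})=c(\mathbf{u})$, and every summand of $\mathbf{u}+\mathbf{q}$ meets $Z$ in exactly one variable, which occurs exactly once in that summand. Every summand $\mathbf{u}_i$ of $\mathbf{u}$ is also a summand of $\mathbf{u}+\mathbf{q}$, so each $\mathbf{u}_i$ satisfies the same condition with respect to $Z$. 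Together with $Z\subseteq c(\mathbf{u})$, this means $Z\in\delta(\mathbf{u})$, contradicting $\delta(\mathbf{u})=\emptyset$. Hence $\delta(\mathbf{u}+\mathbf{q})=\emptyset$.

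Both conditions hold, so the cited proposition gives that $S_7$ satisfies $\mathbf{u}\approx\mathbf{u}+\mathbf{q}$. The only point needing care is that the definition of $\delta$ requires $Z\subseteq c(\cdot)$, and this transfers from $\mathbf{u}+\mathbf{q}$ to $\mathbf{u}$ precisely because the two contents coincide, which is where the hypothesis $c(\mathbf{q})\subseteq c(\mathbf{u})$ is used a second time.
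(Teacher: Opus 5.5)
Your proof is correct and follows the paper's approach. The paper records this corollary, without proof, as an immediate consequence of the characterization ($c(\mathbf{u})=c(\mathbf{v})$ and $\delta(\mathbf{u})=\delta(\mathbf{v})$), and your checks that $c(\mathbf{u}+\mathbf{q})=c(\mathbf{u})$ and $\delta(\mathbf{u}+\mathbf{q})\subseteq\delta(\mathbf{u})=\emptyset$ supply exactly the omitted details.
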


\begin{lemma}\label{sca1ak0}
Let $k \geq 1$ be a natural number. Then the identity
\[
x_1x_2\cdots x_{k+1} + y^2 \approx x_1x_2\cdots x_{k+1} + y^2 + (x_1x_2\cdots x_{k+1})^2
\]
holds in both $S_7$ and $S_c(a_1\cdots a_k)^0$, but fails in $S_c(a_1\cdots a_{k+1})^0$.
\end{lemma}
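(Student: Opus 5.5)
Write $\bp = x_1x_2\cdots x_{k+1}$, $\bu = \bp + y^2$, and $\bq = \bp^2$. The identity under study is $\bu\approx\bu+\bq$, and we check the three claims separately.

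\emph{$S_7$ satisfies the identity.} We apply Corollary~\ref{s7uuq}. The inclusion $c(\bq)\subseteq c(\bu)$ is clear. To see that $\delta(\bu)=\emptyset$, suppose $Z\in\delta(\bu)$. Since $Z$ must meet $c(y^2)=\{y\}$ in a variable occurring exactly once in $y^2$, we would need $y\in Z$ with $occ(y,y^2)=1$. This is false, so no such $Z$ exists.

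\emph{$S_c(a_1\cdots a_k)^0$ satisfies the identity.} Here we use Proposition~\ref{s0id} with $S=S_c(a_1\cdots a_k)$. We have $D_{\bq}(\bu)=\{\bp\}$, because $c(y^2)\not\subseteq c(\bq)$. So it suffices to show that $S$ satisfies $\bp\approx\bp+\bp^2$. Recall that $S$ is flat with zero $0$ as its top element. Every nonzero element of $S$ is a nonempty subword of $a_1\cdots a_k$, so it has length $\ge 1$. Hence any product of $k+1$ elements is $0$, since a nonzero product would be a word of length at least $k+1$ containing only letters from $a_1\cdots a_k$ with multiplicity at most one. Thus $\bp$ evaluates to $0$ under every assignment. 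Then $\bp+\bp^2$ also evaluates to $0$, as $0$ is the additive top. (This is the only step requiring care: $S_c$ has no identity element, which is exactly why the product vanishes.)

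\emph{$S_c(a_1\cdots a_{k+1})^0$ fails the identity.} Again by Proposition~\ref{s0id}, it suffices to show that $S=S_c(a_1\cdots a_{k+1})$ fails $\bp\approx\bp+\bp^2$. Assign $x_i\mapsto a_i$. Then $\bp$ evaluates to $a_1\cdots a_{k+1}\neq 0$, while $\bp^2$ would be a word containing each $a_i$ twice, so it evaluates to $0$. Hence the right side is $a_1\cdots a_{k+1}+0=0$. Since $0$ is the top element and differs from $a_1\cdots a_{k+1}$, the two sides are unequal.

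None of these steps is a real obstacle; the argument consists of routine verifications once Proposition~\ref{s0id} and Corollary~\ref{s7uuq} are invoked.
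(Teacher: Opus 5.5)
Your proof is correct and follows the paper's argument. The one small difference is in the failure part: you use the ``only if'' direction of Proposition~\ref{s0id} with $D_{\bq}(\bu)=\{\bp\}$, whereas the paper writes the witnessing substitution into $S_c(a_1\cdots a_{k+1})^0$ directly, with $x_i\mapsto a_i$ and $y$ sent to the adjoined zero; that substitution is exactly what underlies this direction of the proposition.
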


\begin{proof}
Let $\mathbf{u} = x_1x_2\cdots x_{k+1} + y^2$ and $\mathbf{q} = (x_1x_2\cdots x_{k+1})^2$.
It is easy to see that $c(\mathbf{q}) \subseteq c(\mathbf{u})$ and $\delta(\mathbf{u})$ is empty.
By Corollary~\ref{s7uuq} it follows that $S_7$ satisfies $\mathbf{u} \approx \mathbf{u} + \mathbf{q}$.

Note that $D_{\mathbf{q}}(\mathbf{u}) = x_1x_2\cdots x_{k+1}$.
Since the multiplicative reduct of $S_c(a_1\cdots a_k)$ is $(k+1)$-nilpotent,
we immediately deduce that $S_c(a_1\cdots a_k)$ satisfies the identity $D_{\mathbf{q}}(\mathbf{u}) \approx D_{\mathbf{q}}(\mathbf{u}) + \mathbf{q}$.
Hence, by Proposition~\ref{s0id}, the identity $\mathbf{u} \approx \mathbf{u} + \mathbf{q}$ holds in $S_c(a_1\cdots a_{k})^0$.

Now, consider the substitution
\[
\varphi\colon \{y, x_1, \ldots, x_{k+1}\} \to S_c(a_1a_2\cdots a_{k+1})
\]
defined by $\varphi(x_i) = a_i$ for $1\leq i\leq k+1$ and $\varphi(y) = 0$.
Then $\varphi(\mathbf{u}) = a_1a_2\cdots a_{k+1}$, but $\varphi(\mathbf{q}) = \infty$.
This implies that $\varphi(\mathbf{u}) \neq \varphi(\mathbf{u} + \mathbf{q})$.
Therefore, $S_c(a_1a_2\cdots a_{k+1})^0$ does not satisfy $\mathbf{u} \approx \mathbf{u} + \mathbf{q}$.
\end{proof}

\begin{thm}\label{pro25112310}
The interval \([\mathsf{V}(S_7),\mathsf{V}(S_7^0)]\) is infinite. More precisely, it contains an infinite strictly ascending chain
\[
\mathsf{V}(S_7, S_c(a_1)^0) \subsetneq  \mathsf{V}(S_7, S_c(a_1a_2)^0) \subsetneq  \mathsf{V}(S_7, S_c(a_1a_2a_3)^0)\subsetneq \cdots.
\]
\end{thm}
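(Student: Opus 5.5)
The plan has three parts: show that every variety in the chain lies in the interval, show that the chain is ascending, and use Lemma~\ref{sca1ak0} to make each inclusion strict.

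\emph{Membership in the interval.} Write $\mathcal{W}_k=\mathsf{V}(S_7, S_c(a_1\cdots a_k)^0)$. Clearly $\mathsf{V}(S_7)\subseteq \mathcal{W}_k$. By Proposition~\ref{sca1ak}, $S_c(a_1\cdots a_k)\in\mathsf{V}(S_7)$, so Corollary~\ref{s0t0} gives $S_c(a_1\cdots a_k)^0\in\mathsf{V}(S_7^0)$. Since also $S_7\in\mathsf{V}(S_7^0)$, we get $\mathcal{W}_k\subseteq\mathsf{V}(S_7^0)$. Equivalently, $\mathcal{W}_k=\varphi(\mathsf{V}(S_c(a_1\cdots a_k)))$ for the map $\varphi$ in \eqref{phi}.

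\emph{The chain is ascending.} Proposition~\ref{sca1ak} gives $S_c(a_1\cdots a_k)\in \mathsf{V}(S_c(a_1\cdots a_{k+1}))$. Corollary~\ref{s0t0} then yields $S_c(a_1\cdots a_k)^0\in\mathsf{V}(S_c(a_1\cdots a_{k+1})^0)$, so $\mathcal{W}_k\subseteq\mathcal{W}_{k+1}$. This is just the fact that $\varphi$ is order-preserving.

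\emph{Strictness.} This is the only real content, and Lemma~\ref{sca1ak0} supplies it. The identity
\[
x_1\cdots x_{k+1}+y^2\approx x_1\cdots x_{k+1}+y^2+(x_1\cdots x_{k+1})^2
\]
holds in both $S_7$ and $S_c(a_1\cdots a_k)^0$, so it holds in $\mathcal{W}_k$. It fails in $S_c(a_1\cdots a_{k+1})^0$, which belongs to $\mathcal{W}_{k+1}$. Hence $\mathcal{W}_k\neq\mathcal{W}_{k+1}$. The chain is therefore infinite and strictly ascending, and in particular the interval is infinite.

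All of the difficulty was in choosing the separating identity, and that work is already done by Lemma~\ref{sca1ak0}. So no step should be an obstacle; it only remains to check that Corollary~\ref{s0t0} applies as stated.
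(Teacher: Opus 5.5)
Your proposal is correct and follows the paper's proof: the inclusions come from Proposition~\ref{sca1ak} combined with Corollary~\ref{s0t0} (equivalently, monotonicity of the map $\varphi$ in \eqref{phi}), and strictness comes from the separating identity of Lemma~\ref{sca1ak0}, which holds in $S_7$ and $S_c(a_1\cdots a_k)^0$ but fails in $S_c(a_1\cdots a_{k+1})^0$. Your explicit check that each $\mathsf{V}(S_7, S_c(a_1\cdots a_k)^0)$ lies in $[\mathsf{V}(S_7),\mathsf{V}(S_7^0)]$ is a detail the paper leaves implicit in the well-definedness of $\varphi$.
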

\begin{proof}
Let $k\geq 1$ be a natural number.
By Proposition~\ref{sca1ak},
$S_c(a_1a_2\cdots a_k)$ lies in $\mathsf{V}(S_7)$
and $\mathsf{V}(S_c(a_1a_2\cdots a_k))$ is properly contained in $\mathsf{V}(S_c(a_1a_2\cdots a_{k+1}))$.
Since the mapping $\varphi$ defined in~\eqref{phi} is order-preserving,
we have
\[
\varphi(\mathsf{V}(S_c(a_1a_2\cdots a_k)))\subseteq\varphi(\mathsf{V}(S_c(a_1a_2\cdots a_{k+1}))),
\]
and so
\[
\mathsf{V}(S_7, S_c(a_1a_2\cdots a_k)^0)\subseteq \mathsf{V}(S_7, S_c(a_1\cdots a_{k+1})^0).
\]
Furthermore, Lemma~\ref{sca1ak0} implies that this inclusion is proper, which establishes the required result.
\end{proof}

\section{Kneser hypergraph semirings and continuum intervals}\label{sec:blockhypergraph}
In this section, we strengthen Theorem~\ref{pro25112310} by showing that
$[\mathsf{V}(S_7),\mathsf{V}(S_7^0)]$ contains $2^{\aleph_0}$ distinct varieties.
We adopt the approach developed in the preceding section, using an important class of flat semirings introduced by Gao et al.~\cite{gjrz},
known as $k$-uniform Kneser hypergraph semirings.
For the reader's convenience, we first provide some basic background material on these semirings.

A \emph{hypergraph} $\mathbb{H}$ is a pair $(V, E)$,
where $E$ is a family of nonempty subsets of a set $V$.
Each element of $V$ is a \emph{vertex} of $\mathbb{H}$,
and each element of $E$ is a \emph{hyperedge} of $\mathbb{H}$.
A subset of $V$ is a \emph{subhyperedge} of $\mathbb{H}$ if it is contained in some hyperedge.
For an integer $k \geq 1$, the hypergraph $\mathbb{H}$ is \emph{$k$-uniform} if $|e| = k$ for every $e \in E$.

A \emph{homomorphism} from a hypergraph $\mathbb{G}=(V(\mathbb{G}), E(\mathbb{G}))$ to a hypergraph
$\mathbb{H}=(V(\mathbb{H}), E(\mathbb{H}))$ is a mapping
$\varphi \colon V(\mathbb{G}) \to V(\mathbb{H})$ such that for every hyperedge $e \in E(\mathbb{G})$,
its image $\varphi(e)$ is a hyperedge in $E(\mathbb{H})$.
We denote by ${\rm Hom}(\mathbb{G}, \mathbb{H})$ the set of all homomorphisms from $\mathbb{G}$ to $\mathbb{H}$.

We now recall some ai-semiring terms associated with $k$-uniform hypergraphs,
which were originally introduced in \cite{aj, gjrz}.
Let $\mathbb H=(V, E)$ be a $k$-uniform hypergraph,
and let $\{x_v \mid v \in V\}$ be a set of variables in one-to-one correspondence with $V$.
We denote by ${\bf t}_{{\mathbb{H}}}$ the ai-semiring term
\[
\sum_{\{v_1,v_2,\ldots,v_k\} \in E} x_{v_1}x_{v_2}\cdots x_{v_k},
\]
and by $\mathbf{q}_{_{{\mathbb H}}}$ the word
\[
\prod_{v \in V} x_v,
\]
where the product may be taken in any fixed order.
Note that each hyperedge of $\mathbb{H}$ gives rise to $k!$ distinct words in ${\bf t}_{{\mathbb{H}}}$.

Let $m$ and $k \geq 3$ be positive integers.
The \emph{$k$-uniform Kneser hypergraph} $\mathbb{H}_{k,m}=\langle V_{k,m}, E_{k,m}\rangle$
is the $k$-uniform hypergraph whose vertices are the $m$-subsets of the set $[km]=\{1,2,\ldots,km\}$,
and whose hyperedges are the $k$ pairwise disjoint such subsets.
Consequently, each hyperedge corresponds to a partition of $[km]$ into $k$ parts of size $m$.

Let $\overline{E}_{k,m}$ denote the set of all unions of one or more vertices from some hyperedge of $\mathbb{H}_{k,m}$.
Since any disjoint union of $m$-element subsets of $[km]$ can be extended to a hyperedge of $\mathbb{H}_{k,m}$,
$\overline{E}_{k,m}$ consists precisely of all disjoint unions of $m$-element subsets of $[km]$.
We define a multiplication $\cdot$ on the set $\overline{E}_{k, m} \cup \{0\}$ by the following rule:
\[
A \cdot B :=
\begin{cases}
A \cup B, & \text{if } A \cap B = \emptyset, \\
0, & \text{otherwise},
\end{cases}
\]
with
\[
0\cdot A=A\cdot 0=0\cdot 0=0
\]
for all $A, B\in \overline{E}_{k, m}$.
It is straightforward to verify that $(\overline{E}_{k, m}\cup \{0\}, \cdot)$ forms a $0$-cancellative semigroup.
This structure therefore yields a flat semiring,
called the \emph{$k$-uniform Kneser hypergraph semiring} defined by $\mathbb{H}_{k,m}$, and denoted by $S_{{\mathbb{H}_{k,m}}}$.
Furthermore, by construction, $S_{\mathbb{H}_{k,m}}$ is isomorphic to the subsemiring of
$S_c(a_1a_2\cdots a_{km})$ generated by the set
\[
\left\{ \prod_{i \in A} a_i \ \middle| \ A \in V_{k,m} \right\}.
\]
An application of Proposition~\ref{sca1ak} therefore yields the following proposition.
\begin{proposition}\label{pro25112801}
The $k$-uniform Kneser hypergraph semiring $S_{\mathbb{H}_{k,m}}$
belongs to the variety $\mathsf{V}(S_7)$ for all positive integers $m$ and $k\geq 3$.
\end{proposition}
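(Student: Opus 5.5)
The proposition follows by combining two facts: an embedding of $S_{\mathbb{H}_{k,m}}$ into a flat semiring already known to lie in $\mathsf{V}(S_7)$, and closure of varieties under subalgebras.

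The plan has two steps.

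First, fix $k\geq 3$ and $m\geq 1$, and set $n=km$. Define
\[
\iota\colon \overline{E}_{k,m}\cup\{0\}\to S_c(a_1a_2\cdots a_{n})
\]
by $\iota(0)=0$ and $\iota(A)=\prod_{i\in A}a_i$, where the product is taken in the free commutative semigroup, so its order is irrelevant. For $A\in \overline{E}_{k,m}$ this product is a nonempty subword of $a_1\cdots a_n$, so $\iota$ is well defined. It is injective because a linear commutative word determines its set of letters.

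For $A,B\in\overline{E}_{k,m}$ there are two cases.
\begin{enumerate}[$(1)$]
\item If $A\cap B=\emptyset$, then $\iota(A)\iota(B)=\prod_{i\in A\cup B}a_i$. This is again a linear subword of $a_1\cdots a_n$, so $\iota(A)\iota(B)=\iota(A\cup B)=\iota(A\cdot B)$.
\item If $A\cap B\neq\emptyset$, then some $a_i$ occurs twice in the product. A subword of the linear word $a_1\cdots a_n$ cannot contain a repeated letter, so the product equals $0$ in $S_c(a_1\cdots a_n)$, which is $\iota(A\cdot B)$.
\end{enumerate}
Products involving $0$ are preserved trivially. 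Hence $\iota$ is a multiplicative embedding.

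Addition is also preserved. Both semirings are flat, so addition is determined by the multiplicative zero: $x+x=x$ and $x+y=0$ for $x\neq y$. An injective map sending $0$ to $0$ therefore respects $+$. So $\iota$ is a semiring embedding, with image exactly the subsemiring generated by $\{\prod_{i\in A}a_i\mid A\in V_{k,m}\}$, as noted before the statement.

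Second, Proposition~\ref{sca1ak} gives $S_c(a_1\cdots a_{km})\in\mathsf{V}(S_7)$. Varieties are closed under subalgebras, so $S_{\mathbb{H}_{k,m}}\in\mathsf{V}(S_7)$.

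The only point needing care is the case $A\cap B\neq\emptyset$: one must confirm that the product in $S_c(a_1\cdots a_n)$ is $0$, not some other element. This holds because every nonzero element of $S_c(a_1\cdots a_n)$ is linear, so a repeated letter forces the product to $0$.
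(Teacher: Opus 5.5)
Your proposal is correct and follows the same route as the paper: it embeds $S_{\mathbb{H}_{k,m}}$ into $S_c(a_1a_2\cdots a_{km})$ via $A\mapsto\prod_{i\in A}a_i$, then applies Proposition~\ref{sca1ak} and closure of varieties under subalgebras. You also spell out the verification of the embedding (both multiplicative cases, and addition via flatness), which the paper dismisses as holding ``by construction.''
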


\begin{proposition}\label{erzeyi}
Let $m, n$ and $k$ be integers with $k \geq 3$.
Then $S_{{\mathbb{H}_{k,n}}}$ satisfies the identity $\mathbf{t}_{\mathbb{H}_{k, m}}\approx 0$
if and only if ${\rm Hom}(\mathbb{H}_{k,m},\mathbb{H}_{k,n})$ is empty.
\end{proposition}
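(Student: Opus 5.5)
We prove both directions by analysing the assignments of the variables $x_v$ ($v\in V_{k,m}$) in the flat semiring $S=S_{\mathbb{H}_{k,n}}$. Throughout, write $\mathbf{t}=\mathbf{t}_{\mathbb{H}_{k,m}}$. Recall that $\mathbf{t}\approx 0$ means $\mathbf{t}x\approx x\mathbf{t}\approx \mathbf{t}$ for a fresh variable $x$. In a flat semiring, addition of distinct elements gives the top $0$. Hence, under an assignment $\psi$, the value $\psi(\mathbf{t})$ is nonzero exactly when every word of $\mathbf{t}$ evaluates to one and the same nonzero element $A\in\overline{E}_{k,n}$.

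For the direction where $\mathrm{Hom}(\mathbb{H}_{k,m},\mathbb{H}_{k,n})$ is nonempty, pick a homomorphism $f$. Set $\psi(x_v)=f(v)$, viewed as an $n$-subset of $[kn]$. For each hyperedge $\{v_1,\dots,v_k\}$ of $\mathbb{H}_{k,m}$, the images $f(v_1),\dots,f(v_k)$ form a hyperedge of $\mathbb{H}_{k,n}$. They are therefore pairwise disjoint with union $[kn]$, so every word of $\mathbf{t}$ evaluates to $[kn]$, and $\psi(\mathbf{t})=[kn]\neq 0$. Now choose $\psi(x)$ to be any vertex $B$. Since $[kn]\cap B\ne\emptyset$, we get $\psi(\mathbf{t}x)=[kn]\cdot B=0\neq\psi(\mathbf{t})$. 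So the identity fails.

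For the converse, assume the identity fails, so some $\psi$ gives $\psi(\mathbf{t})\ne\psi(\mathbf{t})\psi(x)$. In particular $\psi(\mathbf{t})=A\neq 0$, and all words of $\mathbf{t}$ evaluate to $A$. We must build a homomorphism from this. Since each hyperedge uses each of its $k$ variables once, each $\psi(x_v)$ is a nonempty union of disjoint $n$-sets, and for each hyperedge the $k$ values are pairwise disjoint with union $A$. Hence $|A|\ge kn$, so $A=[kn]$. Because $k$ nonempty disjoint unions of $n$-blocks fill exactly $k$ blocks, each $\psi(x_v)$ for $v$ in a hyperedge is a single $n$-set. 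Every vertex of $\mathbb{H}_{k,m}$ lies in some hyperedge, since $V_{k,m}$ consists of $m$-subsets of $[km]$ and any one extends to a partition. Therefore $v\mapsto\psi(x_v)$ maps $V_{k,m}$ to $V_{k,n}$ and sends hyperedges to partitions of $[kn]$, that is, to hyperedges. This is the required homomorphism. (If $\psi(\mathbf{t})$ is nonzero, the identity automatically fails, because $A=[kn]$ absorbs every nonzero $B$ to $0$.)

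The only delicate step is the counting argument forcing $A=[kn]$ and each value to be a single vertex; the rest is bookkeeping with flatness. The case $k\ge 3$ plays no special role here beyond the definitions.
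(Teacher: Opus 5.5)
Your proof is correct and follows the paper's argument. In one direction, a nonzero value of $\mathbf{t}_{\mathbb{H}_{k,m}}$ forces every word to evaluate to $[kn]$ with each $x_v$ sent to a single $n$-subset, which yields a homomorphism; in the other, a homomorphism gives an evaluation with value $[kn]\neq 0$. Your counting argument, and your remarks that every vertex of $\mathbb{H}_{k,m}$ lies in a hyperedge and that $[kn]\cdot B=0$ for any vertex $B$, just make explicit steps the paper leaves implicit (the paper attributes the first of these to $(k+1)$-nilpotency).
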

\begin{proof}
Suppose that $S_{\mathbb{H}_{k,n}}$ does not satisfy the identity $\mathbf{t}_{\mathbb{H}_{k, m}}\approx 0$.
Then there exists a substitution $\varphi \colon \{x_v\mid v\in V_{k,m}\} \to S_{{\mathbb{H}_{k,n}}}$
such that $\varphi(\mathbf{t}_{\mathbb{H}_{k, m}})\neq 0$.
Since each summand of $\mathbf{t}_{\mathbb{H}_{k,m}}$ is a linear word of length $k$ and
the multiplicative of $S_{{\mathbb{H}_{k,n}}}$ is a $(k+1)$-nilpotent semigroup,
it follows that $\varphi(\mathbf{t}_{\mathbb{H}_{k, m}})=[kn]$,
and so $\varphi(x_{v_1}\dots x_{v_k})=[kn]$ for every $\{v_1, v_2, \ldots, v_k\} \in E_{k, m}$.
This implies that $\varphi$ maps the variables to elements corresponding to vertices, and that
$\{\varphi(x_{v_1}),\dots,\varphi(x_{v_k})\}$ is a hyperedge of $\mathbb{H}_{k,n}$ for every hyperedge $\{v_1,\dots,v_k\}$ of $\mathbb{H}_{k,m}$.
Therefore, there is a homomorphism from $\mathbb{H}_{k,m}$ to $\mathbb{H}_{k,n}$.

Conversely, suppose that $\varphi$ is a homomorphism from $\mathbb{H}_{k,m}$ to $\mathbb{H}_{k,n}$.
Consider the substitution
\[
\psi\colon \{x_v \mid v \in V_{k, m}\} \to S_{\mathbb H_{k, n}}, \quad x_v \mapsto \varphi(v).
\]
Then $\psi(\mathbf{t}_{\mathbb{H}_{k, m}})=[kn]\neq 0$.
Thus $S_{{\mathbb{H}_{k,n}}}$ does not satisfy the identity $\mathbf{t}_{\mathbb{H}_{k, m}}\approx 0$.
\end{proof}

\medskip

\noindent \textbf{Remark.}
Propositions~\ref{pro25112801} and \ref{erzeyi} also follow from Lemmas 4.2 and 4.10 of \cite{gjrz}, respectively.

Let $\varphi$ be a mapping from $V_{k,m}$ to $V_{k,n}$. According to the definitions of hypergraph homomorphism and the Kneser hypergraph, $\varphi$ is a homomorphism from $\mathbb{H}_{k,m}$ to $\mathbb{H}_{k,n}$
if and only if $\varphi$ maps every set of $k$ pairwise disjoint $m$-subsets of $[km]$ to $k$ pairwise disjoint subsets in $[kn]$.
This condition is equivalent to requiring that $\varphi$ preserves disjointness, that is, it maps any two disjoint $m$-subsets of $[km]$ to disjoint $m$-subsets of $[kn]$.

Let $\mathbb{G}_{k, m}$ denote the \emph{Kneser graph}, whose vertices are the $m$-subsets of the set $[km]$,
with two $m$-subsets adjacent if they are disjoint.
The above argument shows that ${\rm Hom}(\mathbb{H}_{k,m},\mathbb{H}_{k,n}) = {\rm Hom}(\mathbb{G}_{k,m},\mathbb{G}_{k,n})$.
This result can also be found in \cite[Theorem 1]{fmm}.
On the other hand, \cite[Lemma 7.9.3]{cg} states that
${\rm Hom}(\mathbb{G}_{k,m},\mathbb{G}_{k,n})$ is nonempty if and only if $m$ divides $n$.
We therefore obtain:

\begin{lemma}\label{homgh}
Let $m, n$ and $k$ be integers with $k \geq 3$. Then
${\rm Hom}(\mathbb{H}_{k,m},\mathbb{H}_{k,n})$ is nonempty if and only if $m$ divides $n$.
\end{lemma}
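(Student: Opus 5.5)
The lemma is essentially the combination of two facts stated just before it, so the plan is to assemble them and check each link.

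\textbf{Step 1: reduce to the Kneser graph.} I will show ${\rm Hom}(\mathbb{H}_{k,m},\mathbb{H}_{k,n})={\rm Hom}(\mathbb{G}_{k,m},\mathbb{G}_{k,n})$ as sets of maps $V_{k,m}\to V_{k,n}$.

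Suppose $\varphi$ is a hypergraph homomorphism and $A,B$ are disjoint $m$-subsets of $[km]$. Since $k\ge 3$ and $|[km]\setminus(A\cup B)|=(k-2)m$, we can complete $\{A,B\}$ to a partition of $[km]$ into $k$ blocks of size $m$. The image of that partition is a hyperedge, i.e.\ $k$ pairwise disjoint $m$-sets. Hence $\varphi(A)\ne\varphi(B)$ and $\varphi(A)\cap\varphi(B)=\emptyset$, so $\varphi$ is a graph homomorphism.

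Conversely, suppose $\varphi$ preserves disjointness. Any hyperedge consists of $k$ pairwise disjoint sets, so their images are $k$ pairwise disjoint $m$-subsets of $[kn]$. These are distinct, and they form a set of size $k$ because disjoint nonempty sets are distinct. So the image is a hyperedge.

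This is \cite[Theorem 1]{fmm}; I would include the short argument above rather than only cite it.

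\textbf{Step 2: the graph result.} Invoke \cite[Lemma 7.9.3]{cg}: ${\rm Hom}(\mathbb{G}_{k,m},\mathbb{G}_{k,n})\neq\emptyset$ iff $m\mid n$.

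For a self-contained treatment:
\begin{itemize}
\item If $n=dm$, partition $[kn]$ into $km$ blocks $B_1,\dots,B_{km}$ of size $d$ and send $A\mapsto\bigcup_{i\in A}B_i$. This visibly preserves disjointness.
\item For the converse, the standard route uses that the maximum cliques of $\mathbb{G}_{k,m}$ have size $k$ and correspond to partitions. A homomorphism sends the $k$-cliques arising from partitions into partitions of $[kn]$. One then analyses, via the fractional chromatic number ($km/m=k$ on both sides) or an Erd\H{o}s--Ko--Rado-type argument, how $\varphi$ induces a map with fibers of equal size $n/m$.
\end{itemize}

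\textbf{Main obstacle.} The hard part is the necessity direction ($m\mid n$) of the graph statement. Since the paper already quotes \cite[Lemma 7.9.3]{cg}, I would rely on that citation rather than reprove it. The real work in this proof is Step 1, where the hypothesis $k\ge3$ is what allows any disjoint pair to be completed to a full partition.
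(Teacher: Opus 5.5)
Your proposal is correct and follows the same route as the paper. You identify ${\rm Hom}(\mathbb{H}_{k,m},\mathbb{H}_{k,n})$ with ${\rm Hom}(\mathbb{G}_{k,m},\mathbb{G}_{k,n})$ via preservation of disjointness (cf.\ \cite[Theorem 1]{fmm}), then invoke \cite[Lemma 7.9.3]{cg}; your Step 1 just writes out the completion-to-a-partition argument that the paper leaves implicit.

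Some minor points. The vertices of $\mathbb{H}_{k,n}$ are $n$-subsets, not $m$-subsets, of $[kn]$. The completion in Step 1 only needs $k\ge 2$. The hypothesis $k\ge 3$ is really used in the cited graph result: for $k=2$ the Kneser graph is a perfect matching, so homomorphisms exist for all $m,n$ and the divisibility claim fails. So the real work sits in Step 2, not Step 1.
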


\begin{lemma}\label{s71}
Let $k \geq 3$ be an integer, and let $p$ and $q$ be primes. Then $S_{\mathbb H_{k,q}}$ satisfies the identity
\begin{equation}\label{thinfty}
{\bf t}_{\mathbb{H}_{k,p}} \approx 0
\end{equation}
if and only if $p \neq q$.
\end{lemma}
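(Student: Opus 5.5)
This is a direct combination of Proposition~\ref{erzeyi} and Lemma~\ref{homgh}. Apply Proposition~\ref{erzeyi} with $m=p$ and $n=q$ (the hypothesis $k\ge 3$ is the same). It says that $S_{\mathbb H_{k,q}}$ satisfies ${\bf t}_{\mathbb{H}_{k,p}} \approx 0$ if and only if ${\rm Hom}(\mathbb{H}_{k,p},\mathbb{H}_{k,q})$ is empty. By Lemma~\ref{homgh}, this set of homomorphisms is nonempty exactly when $p$ divides $q$. So the identity \eqref{thinfty} holds in $S_{\mathbb H_{k,q}}$ if and only if $p\nmid q$.

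It remains to translate the divisibility condition using that $p$ and $q$ are primes. If $p=q$, then certainly $p\mid q$. Conversely, if $p\mid q$ with $q$ prime, then $p\in\{1,q\}$; since $p$ is prime, $p\neq 1$, so $p=q$. Hence $p\nmid q$ if and only if $p\neq q$, which gives the lemma.

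There is no real obstacle here. The only point to check is that the instances of the earlier results are legitimate. The Kneser hypergraphs $\mathbb H_{k,p}$ and $\mathbb H_{k,q}$ are defined because $p,q\ge 2$ are positive integers. Lemma~\ref{homgh} depends on the cited Kneser-graph fact \cite[Lemma 7.9.3]{cg}, and we assume it as stated. For intuition in the case $p=q$, the identity map on $V_{k,p}$ is a homomorphism. The substitution $x_v\mapsto v$ then sends every summand of ${\bf t}_{\mathbb{H}_{k,p}}$ to $[kp]\neq 0$, so the identity fails directly.
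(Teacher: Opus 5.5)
Your proposal is correct and is the paper's own argument: the paper simply states that the lemma follows immediately from Proposition~\ref{erzeyi} (with $m=p$, $n=q$) and Lemma~\ref{homgh}. You also spell out the one step the paper leaves implicit, namely that for primes $p$ and $q$, $p \mid q$ holds if and only if $p=q$.
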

\begin{proof}
This follows immediately from Proposition~\ref{erzeyi} and Lemma~\ref{homgh}.
\end{proof}

%
%

\begin{lemma}\label{hkq0}
Let $k \geq 3$ be an integer, and let $p$ and $q$ be primes. Then the identity
\begin{equation}\label{thyq}
{\bf t}_{\mathbb{H}_{k,p}} + y^2 \approx {\bf t}_{\mathbb{H}_{k,p}} + y^2 + {\bf q}_{_{\mathbb{H}_{k,p}}}
\end{equation}
holds in $S_7$, and $S_{{\mathbb H_{k,q}}}^0$ satisfies this identity if and only if $p \neq q$.
\end{lemma}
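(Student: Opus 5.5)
We follow the template of the proof of Lemma~\ref{sca1ak0}. Put $\mathbf{u}={\bf t}_{\mathbb{H}_{k,p}}+y^2$ and $\mathbf{q}={\bf q}_{_{\mathbb{H}_{k,p}}}$.

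\emph{The identity holds in $S_7$.} First, $c(\mathbf{q})=\{x_v\mid v\in V_{k,p}\}\subseteq c(\mathbf{u})$. Next, $\delta(\mathbf{u})=\emptyset$: any $Z\in\delta(\mathbf{u})$ would have to meet $c(y^2)=\{y\}$ in a variable occurring exactly once in $y^2$, which is impossible since $occ(y,y^2)=2$. Corollary~\ref{s7uuq} then gives the identity in $S_7$.

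\emph{Reduction for $S_{\mathbb{H}_{k,q}}^0$.} We apply Proposition~\ref{s0id}. Since $y\notin c(\mathbf{q})$, while every word of ${\bf t}_{\mathbb{H}_{k,p}}$ uses only vertex variables, we get $D_{\mathbf{q}}(\mathbf{u})={\bf t}_{\mathbb{H}_{k,p}}$, which is nonempty. Hence \eqref{thyq} holds in $S_{\mathbb{H}_{k,q}}^0$ if and only if $S_{\mathbb{H}_{k,q}}$ satisfies
\[
{\bf t}_{\mathbb{H}_{k,p}}\approx {\bf t}_{\mathbb{H}_{k,p}}+{\bf q}_{_{\mathbb{H}_{k,p}}}.
\]

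\emph{The case $p\neq q$.} By Lemma~\ref{s71}, $S_{\mathbb{H}_{k,q}}$ satisfies ${\bf t}_{\mathbb{H}_{k,p}}\approx 0$. As noted in Section~1, for a flat semiring this implies ${\bf t}_{\mathbb{H}_{k,p}}\approx {\bf t}_{\mathbb{H}_{k,p}}+\mathbf{r}$ for every word $\mathbf{r}$. Taking $\mathbf{r}=\mathbf{q}$ finishes this case.

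\emph{The case $p=q$.} We must exhibit a failing substitution. Map $x_v\mapsto v$, viewing $v$ as an element of $S_{\mathbb{H}_{k,p}}$; this is the identity homomorphism. Every hyperedge word then evaluates to $[kp]$, so the value of ${\bf t}_{\mathbb{H}_{k,p}}$ is $[kp]\neq\infty$. On the other hand, $\mathbf{q}$ is the product of all $\binom{kp}{p}>k$ vertices. These vertices are not pairwise disjoint; for instance, two distinct $p$-subsets sharing an element exist because $kp\geq 3$ and $p\ge1$ (and for $p=1$ the product has $k$ singletons but also we can note length $\binom{k}{1}=k$; here use instead that the product of more than $k$ nonzero factors is $\infty$, or for $p=1$ use that $\binom{k}{1}=k$ vertices are disjoint—so treat this separately). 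Since $p$ is prime we have $p\ge 2$, so $\binom{kp}{p}>k$. By $(k+1)$-nilpotency the value of $\mathbf{q}$ is then $\infty$. Thus ${\bf t}+\mathbf{q}$ evaluates to $[kp]+\infty=\infty\neq[kp]$, and the identity fails.

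The only delicate point is the bookkeeping in the reduction: checking that $D_{\mathbf{q}}(\mathbf{u})$ drops exactly the summand $y^2$, and that the value of $\mathbf{q}$ really is the zero $\infty$. Primality of $p$, through $p\geq2$, guarantees the latter.
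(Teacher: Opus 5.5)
Your proof is correct and follows the paper's argument. The $S_7$ part via Corollary~\ref{s7uuq} and the case $p\neq q$ via Lemma~\ref{s71} and Proposition~\ref{s0id} are the same as in the paper, and for $p=q$ the paper substitutes $y\mapsto 0$ (the adjoined element) and $x_v\mapsto v$ directly in $S_{\mathbb{H}_{k,p}}^0$, which is exactly what the \emph{only if} direction of Proposition~\ref{s0id} that you invoke amounts to. Your nilpotency justification that $\mathbf{q}$ evaluates to $\infty$ (because $\binom{kp}{p}>k$ when $p\geq 2$) is a useful detail the paper leaves implicit, but you should delete the garbled parenthetical about $p=1$, since primality already gives $p\geq 2$.
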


\begin{proof}
For convenience,
let us denote ${\bf t}_{\mathbb{H}_{k,p}} + y^2$ and ${\bf q}_{_{\mathbb{H}_{k,p}}}$ by $\bu$ and $\bq$, respectively.
Observe that
$
\delta(\bu) = \emptyset \ \text{and} \ c(\bq)
\subseteq c(\bu).
$
By Corollary~\ref{s7uuq}, we immediately deduce that the identity~\eqref{thyq} holds in $S_7$.

Now consider the substitution
$\varphi\colon \{x_v \mid v \in V_{k, p}\} \cup \{y\} \to S_{\mathbb H_{k, p}}^0$
defined by $\varphi(y) = 0$ and $\varphi(x_v) = v$ for all $v \in V_{k, p}$.
Then $\varphi(\bu) = [kp] + 0 = [kp]$,
while $\varphi(\bq) = \infty$.
Thus
$\varphi(\bu) \neq \varphi(\bu+\bq)$,
and so $S_{\mathbb H_{k,p}}^0$ does not satisfy identity~\eqref{thyq}.

Let $q$ be a prime different from $p$. Then by Lemma~\ref{s71},
$S_{\mathbb H_{k,q}}$ satisfies the identity \eqref{thinfty}.
Since the multiplicative zero of $S_{\mathbb H_{k,q}}$ is its additive top element,
it follows that $S_{\mathbb H_{k,q}}$ also satisfies the identity
$
{\bf t}_{\mathbb{H}_{k,p}} \approx {\bf t}_{\mathbb{H}_{k,p}} + \bq.
$
Note that $D_{\bq}(\bu) = {\bf t}_{\mathbb{H}_{k,p}}$.
By Proposition~\ref{s0id} we therefore obtain that $S_{\mathbb H_{k,q}}^0$ satisfies the identity~\eqref{thyq}.
\end{proof}

Throughout this section, let $P$ denote the set of all primes.
For each $k \geq 3$, let $\mathcal{V}_k$ denote the ai-semiring variety defined by the following identities:
\[
\mathbf{t}_{\mathbb{H}_{k,p}} + y^2 \approx \mathbf{t}_{\mathbb{H}_{k,p}} + y^2 + \mathbf{q}_{_{\mathbb{H}_{k,p}}}, \ p \in P.
\]
Lemma \ref{hkq0} implies that that $S_7$ is contained in $\mathcal{V}_k$.
We can now state the key conclusion of this section.

\begin{thm}\label{s7ss0}
Let $k\geq 3$ be an integer, and let $S$ be an ai-semiring such that $\mathsf{V}(S) \in [\mathsf{V}(S_7), \mathcal{V}_k]$.
Then the interval $[\mathsf{V}(S),\mathsf{V}(S^0)]$ has cardinality $2^{\aleph_0}$.
Moreover, it contains both a chain and an antichain of cardinality $2^{\aleph_0}$.
\end{thm}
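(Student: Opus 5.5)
For each subset $Q\subseteq P$ I will set $S_Q=\{S_{\mathbb H_{k,q}}\mid q\in Q\}$ and study the variety $\mathcal{W}_Q=\mathsf{V}(S, \{T^0\mid T\in S_Q\})$. The aim is to show that the map $Q\mapsto \mathcal{W}_Q$ from the power set of $P$ into $[\mathsf{V}(S),\mathsf{V}(S^0)]$ is an order embedding. Since $P$ is countably infinite, this will give everything claimed: the power set of $P$ has cardinality $2^{\aleph_0}$ and contains both a chain and an antichain of size $2^{\aleph_0}$. For the chain, identify $P$ with $\mathbb{Q}$ and use Dedekind cuts. For the antichain, take an almost disjoint family, or the sets $\{p_{2n+\epsilon_n}\}$ indexed by $\epsilon\in\{0,1\}^{\mathbb N}$. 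The interval also has at most $2^{\aleph_0}$ elements, because there are only countably many identities and a variety is determined by a set of them.

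\textbf{Membership in the interval.} By Proposition~\ref{pro25112801}, each $S_{\mathbb H_{k,q}}$ lies in $\mathsf{V}(S_7)\subseteq\mathsf{V}(S)$. Corollary~\ref{s0t0} then gives $S_{\mathbb H_{k,q}}^0\in\mathsf{V}(S^0)$. Hence $\mathsf{V}(S)\subseteq \mathcal{W}_Q\subseteq \mathsf{V}(S^0)$; this is just $\varphi(\mathsf{V}(S_Q))$ in the notation of~\eqref{phi}. The map $Q\mapsto\mathcal{W}_Q$ is clearly order preserving.

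\textbf{Order reflection.} Suppose $Q\not\subseteq Q'$, and pick $p\in Q\setminus Q'$. I will use identity~\eqref{thyq} for this $p$ and show that it separates the two varieties.
\begin{enumerate}[$(1)$]
\item The identity holds in $S$, because $\mathsf{V}(S)\subseteq\mathcal{V}_k$ and it is one of the defining identities of $\mathcal{V}_k$.
\item It holds in every $S_{\mathbb H_{k,q}}^0$ with $q\in Q'$. Such $q$ satisfy $q\neq p$, so Lemma~\ref{hkq0} applies.
\item It fails in $S_{\mathbb H_{k,p}}^0\in\mathcal{W}_Q$, again by Lemma~\ref{hkq0}.
\end{enumerate}
Together, (1) and (2) show the identity holds in $\mathcal{W}_{Q'}$, and (3) shows it fails in $\mathcal{W}_Q$. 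Therefore $\mathcal{W}_Q\not\subseteq\mathcal{W}_{Q'}$. This establishes that $Q\subseteq Q'$ if and only if $\mathcal{W}_Q\subseteq\mathcal{W}_{Q'}$, so the map is an order embedding, and injective in particular.

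\textbf{Main obstacle.} The delicate point is that the separating identity must hold simultaneously in $S$ and in all the adjoined-zero semirings indexed by $Q'$. This is exactly why the hypothesis $\mathsf{V}(S)\subseteq\mathcal{V}_k$ is imposed, and why the Kneser-hypergraph homomorphism criterion (Lemma~\ref{homgh}) is applied to primes, where $m\mid n$ forces $m=n$. Once these are in place, the remaining steps are routine set-theoretic bookkeeping.
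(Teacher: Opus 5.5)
Your proposal is correct and follows essentially the same route as the paper: the same map $Q\mapsto \mathsf{V}(S,\{S_{\mathbb H_{k,q}}^0\mid q\in Q\})$ on sets of primes, with distinct images separated by identity~\eqref{thyq} via Lemma~\ref{hkq0}. Your write-up is in fact slightly more careful than the paper's in three places. You state explicitly that~\eqref{thyq} must also hold in $S$, which is exactly where $\mathsf{V}(S)\subseteq\mathcal{V}_k$ is used. You claim only an order embedding, which is what the argument actually establishes and all that the chain/antichain transfer needs. And you justify the upper bound of $2^{\aleph_0}$ on the size of the interval.
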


\begin{proof}
Let $\mathcal{P}(P)$ denote the power set of the set $P$, and let $k \geq 3$ be an integer.
Define a mapping $\psi\colon \mathcal{P}(P) \to [\mathsf{V}(S),\mathsf{V}(S^0)]$ by
\[
\psi(Q) = \mathsf{V}(\{S_{\mathbb{H}_{k,q}}^0 \mid q \in Q\}) \vee \mathsf{V}(S) \quad \text{for } Q \in \mathcal{P}(P),
\]
where $\mathsf{V}(\{S_{\mathbb{H}_{k,q}}^0 \mid q \in Q\}) \vee \mathsf{V}(S)$
denotes the join of $\mathsf{V}(\{S_{\mathbb{H}_{k,q}}^0 \mid q \in Q\})$
and $\mathsf{V}(S)$.
Corollary~\ref{s0t0} and Proposition~\ref{pro25112801} ensure that $\psi$ is well-defined.
We show that $\psi$ is a lattice embedding.

Indeed, let $Q$ be a proper subset of $P$, and let $p$ be a prime that is not in $Q$.
By Lemma~\ref{hkq0}, the identity~\eqref{thyq} is satisfied by $\{S_{\mathbb H_{k,q}}^0 \mid q \in Q\}$,
but fails in $S_{\mathbb H_{k,p}}^0$.
This observation allows us to conclude that $\varphi$ is a lattice embedding.

Since $\mathcal{P}(P)$ has the cardinality $2^{\aleph_0}$,
it follows immediately that the interval $[\mathsf{V}(S),\mathsf{V}(S^0)]$ also has the cardinality $2^{\aleph_0}$.
Moreover, by \cite[Chapter I, \S10]{ku2011}, $\mathcal{P}(P)$ contains both a chain and an antichain of size $2^{\aleph_0}$,
and therefore so does $[\mathsf{V}(S),\mathsf{V}(S^0)]$.
\end{proof}

\noindent\textbf{Remark.} Consider the mapping
\[
\Phi \colon \mathcal{P}(P) \to \mathcal{L}(\mathsf{V}(S)), \quad Q \mapsto \mathsf{V}(\{S_{\mathbb{H}_{k,q}} \mid q \in Q\}).
\]
Then the mapping $\psi$ in the proof of Theorem~\ref{s7ss0} coincides with the composition $\varphi \circ \Phi$, where $\varphi$ is defined in \eqref{phi}.
One can show that $\Phi$ is also a lattice embedding, whence $\mathcal{L}(\mathsf{V}(S))$ contains $2^{\aleph_0}$ distinct subvarieties.
Thus, we have essentially shown that the restriction of $\varphi$ to the image of $\Phi$ is injective,
which establishes that the interval $[\mathsf{V}(S), \mathsf{V}(S^0)]$ has cardinality $2^{\aleph_0}$.

\begin{cor}\label{b212n0}
Let $S$ be an ai-semiring such that $\mathsf{V}(S)\in [\mathsf{V}(S_7), \mathsf{V}(B_2^1)]$.
Then the interval $[\mathsf{V}(S),\mathsf{V}(S^0)]$ has cardinality $2^{\aleph_0}$.
\end{cor}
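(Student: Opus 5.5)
The plan is to deduce the corollary from Theorem~\ref{s7ss0} with $k=3$. It suffices to show that $\mathsf{V}(B_2^1)\subseteq \mathcal{V}_3$. Then every $S$ with $\mathsf{V}(S)\in[\mathsf{V}(S_7),\mathsf{V}(B_2^1)]$ satisfies $\mathsf{V}(S)\in[\mathsf{V}(S_7),\mathcal{V}_3]$, and the theorem gives the cardinality $2^{\aleph_0}$. So the whole task is to check that $B_2^1$ satisfies, for every prime $p$, the identity
\[
\mathbf{t}_{\mathbb{H}_{3,p}} + y^2 \approx \mathbf{t}_{\mathbb{H}_{3,p}} + y^2 + \mathbf{q}_{_{\mathbb{H}_{3,p}}}.
\]

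I would verify this directly by evaluating in $B_2^1$. The order on $B_2^1$ has $0$ at the top; $a,b$ lie just below $0$; and $1<ab,\,ba<0$. Fix a substitution $\varphi$. If $\varphi(\mathbf{t}+y^2)=0$, the right side is also $0$, since $0$ is the top. Otherwise the sum $\varphi(\mathbf{t})+\varphi(y)^2$ is a nonzero element.

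First consider $\varphi(y)^2$. Squares in $B_2^1$ lie in $\{0,1,ab,ba\}$, so $\varphi(y)^2\in\{1,ab,ba\}$, and $\varphi(\mathbf{t})$ must be comparable with it and have a nonzero join with it. Every summand $\varphi(x_ux_vx_w)$ must therefore lie in the down-set of an element of $\{1,ab,ba\}$, hence in $\{1,ab\}$ or in $\{1,ba\}$. So all such products are idempotents, lying in $\{1,e\}$ with $e\in\{ab,ba\}$.

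Next I use the symmetry of $\mathbf{t}$: for each hyperedge $\{u,v,w\}$, all $3!$ orderings of $x_ux_vx_w$ appear as summands. If some $\varphi(x_v)$ is $a$ or $b$, I would look at the letter products. A product of three elements in which $a$ occurs equals $ab$ or $ba$ depending on where the $a$ sits, and some rearrangements give $0$. The aim is to conclude that a mixed set of values forces a product outside $\{1,e\}$, which is impossible. This should force every $\varphi(x_v)\in\{1,ab,ba\}$, and in fact every $\varphi(x_v)\in\{1,e\}$: if two variables in a hyperedge took the values $ab$ and $ba$, their product would be $0$.

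Once all $\varphi(x_v)\in\{1,e\}$, the subsemiring $\{1,e\}$ is commutative and idempotent. Then $\varphi(\mathbf{q})=\prod_v\varphi(x_v)$ equals $e$ if some $x_v\mapsto e$, and $1$ otherwise. In either case it is at most $\varphi(\mathbf{t})$, because every vertex lies in some hyperedge. Hence the identity holds.

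The main obstacle is the case analysis in the first part of the previous paragraph, showing that $a$ or $b$ cannot occur. If some $\varphi(x_u)=a$, every ordering of its hyperedge must give a product in $\{1,ab\}$ or $\{1,ba\}$. But $a\cdot s\cdot t$ and $s\cdot t\cdot a$ cannot both be nonzero idempotents of the same type: the first lies in $aB$ and the second in $Ba$, and $aB\cap Ba\cap\{ab,ba,1\}=\emptyset$. This contradiction should settle that case, and the case of $b$ is symmetric.

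Alternatively, one could try to invoke known results that $\mathsf{V}(B_2^1)$ satisfies some identity implying membership in $\mathcal{V}_3$, but the direct evaluation seems cleanest.
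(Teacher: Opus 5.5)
Your proposal is correct and follows the paper's proof. Like the paper, you reduce via Theorem~\ref{s7ss0} with $k=3$ to showing $B_2^1\in\mathcal{V}_3$, then verify the identities by evaluating an arbitrary substitution in $B_2^1$: all hyperedge products lie in a common $\{1,e\}$, so all vertex values do too, and $\varphi(\mathbf{q}_{_{\mathbb{H}_{3,p}}})\le\varphi(\mathbf{t}_{\mathbb{H}_{3,p}})$. Your $aB$ versus $Ba$ observation ($aB=\{0,a,ab\}$, $Ba=\{0,a,ba\}$, neither containing $1$) simply makes explicit the exclusion of the values $a,b$, which the paper's Case~2 asserts without detail.
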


\begin{proof}
By Theorem~\ref{s7ss0}, it suffices to show that $B_2^1$ belongs to $\mathcal{V}_3$.
To this end, we must prove that $B_2^1$ satisfies the identity
\begin{equation}\label{b213p}
{\bf t}_{\mathbb{H}_{3,p}} + y^2 \approx {\bf t}_{\mathbb{H}_{3,p}} + y^2 + \mathbf{q}_{_{\mathbb{H}_{3,p}}}
\end{equation}
for every prime $p$. Let $\varphi$ be an arbitrary substitution from $\{x_v \mid v\in V_{3,p}\}$ to $B_2^1$.
If $\varphi({\bf t}_{\mathbb{H}_{3,p}} + y^2)=0$, then
\[
\varphi({\bf t}_{\mathbb{H}_{3,p}} + y^2)=\varphi({\bf t}_{\mathbb{H}_{3,p}} + y^2+\mathbf{q}_{_{\mathbb{H}_{3,p}}})=0.
\]
Now suppose that $\varphi({\bf t}_{\mathbb{H}_{3,p}} + y^2)\neq 0$.
Since $\varphi(y^2)\in \{ab, ba, 1\}$, it follows that $\varphi({\bf t}_{\mathbb{H}_{3,p}})\in \{ab,ba,1\}$.
We proceed by considering the following three cases:

\textbf{Case 1.} $\varphi({\bf t}_{\mathbb{H}_{3,p}})=1$.
Then $\varphi(x_ux_vx_w)=1$ for all $\{u,v,w\}\in E_{3, p}$, since $1$ is minimal in $(B_2^1, \leq)$.
This implies that $\varphi(x_u)=\varphi(x_v)=\varphi(x_w)=1$ for all $\{u,v,w\}\in E_{3, p}$,
and so $\varphi( \mathbf{q}_{_{\mathbb{H}_{3,p}}})=1$.
Thus
\[
\varphi({\bf t}_{\mathbb{H}_{3,p}} + y^2)=\varphi({\bf t}_{\mathbb{H}_{3,p}} + y^2+\mathbf{q}_{_{\mathbb{H}_{3,p}}})=1+\varphi(y^2).
\]

\textbf{Case 2.} $\varphi({\bf t}_{\mathbb{H}_{3,p}})=ab$.
Then $\varphi(x_ux_vx_w)\in \{ab,1\}$ for all $\{u,v,w\}\in E_{3, p}$ and
$\varphi(x_{u_1}x_{u_2}x_{u_3})=ab$ for some $\{u_1,u_2,u_3\}\in E_{3, p}$.
It follows that
\[
\varphi(x_{u_{i_1}})\varphi(x_{u_{i_2}})\varphi(x_{u_{i_3}})=ab
\]
for each $\{i_1,i_2,i_3\}=\{1,2,3\}$. Furthermore,
$\{\varphi(x_{u_1}),\varphi(x_{u_2}),\varphi(x_{u_3})\}=\{ab\}$ or $\{ab, 1\}$.
This implies that
$\{ab\}\subseteq \{\varphi(x_v)\mid v\in V_{3, p}\}\subseteq \{ab,1\}$,
and so $\varphi(\mathbf{q}_{_{\mathbb{H}_{3,p}}})=ab$. Thus
\[
\varphi({\bf t}_{\mathbb{H}_{3,p}} + y^2)
=\varphi({\bf t}_{\mathbb{H}_{3,p}} + y^2+\mathbf{q}_{_{\mathbb{H}_{3,p}}})=ab+\varphi(y^2).
\]

\textbf{Case 3.} $\varphi({\bf t}_{\mathbb{H}_{3,p}})=ba$.
This is similar to the preceding case.

In all cases, the two sides of~\eqref{b213p} evaluate equally under $\varphi$.
Therefore, $B_2^1$ satisfies identity~\eqref{b213p}.
\end{proof}

As a consequence of Corollary~\ref{b212n0}, we have
\begin{corollary}\label{s7s70}
The intervals $[\mathsf{V}(S_7),\mathsf{V}(S_7^0)]$ and $[\mathsf{V}(B_2^1),\mathsf{V}((B_2^1)^0)]$
both have cardinality $2^{\aleph_0}$.
\end{corollary}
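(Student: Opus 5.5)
This is a direct specialization of Corollary~\ref{b212n0}, so the plan is simply to check that both $S=S_7$ and $S=B_2^1$ satisfy its hypothesis. That hypothesis is $\mathsf{V}(S)\in[\mathsf{V}(S_7),\mathsf{V}(B_2^1)]$, and it splits into two inclusions, $\mathsf{V}(S_7)\subseteq\mathsf{V}(B_2^1)$ and the trivial inclusions $\mathsf{V}(S)\subseteq\mathsf{V}(S)$.

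For $S=S_7$, the condition $\mathsf{V}(S_7)\in[\mathsf{V}(S_7),\mathsf{V}(B_2^1)]$ amounts to $S_7\in\mathsf{V}(B_2^1)$. This holds because, as noted in the introduction, $B_2^1$ contains a copy of $S_7$, and varieties are closed under subalgebras.

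To make the embedding explicit, I would take the subset $\{1,ab,0\}$ of $B_2^1$, with $ab$ playing the role of $a$ in Table~\ref{tb24111401}. Multiplicatively, $1$ is the identity, $(ab)^2=ab$, and $0$ is the zero. This already shows the choice $ab\mapsto a$ is wrong, since in $S_7$ we have $a\cdot a=0$. So I would instead use $\{1,a,0\}$, where $a$ is the matrix unit $a$. Then $a^2=0$, $1\cdot a=a\cdot 1=a$, and $1^2=1$. Additively, in Figure~\ref{fig1} the element $a$ lies only below $0$ and is incomparable with $1$, so $1+a=0$, $a+0=0$ and $1+0=0$. This matches Table~\ref{tb24111401} exactly, giving $S_7\cong\{0,a,1\}\le B_2^1$.

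For $S=B_2^1$, the condition $\mathsf{V}(B_2^1)\in[\mathsf{V}(S_7),\mathsf{V}(B_2^1)]$ again needs only $\mathsf{V}(S_7)\subseteq\mathsf{V}(B_2^1)$, which was just established. Applying Corollary~\ref{b212n0} in both cases then gives the claim. The only point needing care is choosing the correct three-element subsemiring, namely $\{0,a,1\}$ rather than $\{0,ab,1\}$. There is no real obstacle beyond that.
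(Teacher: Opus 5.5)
Your proposal is correct and is the paper's argument: the corollary follows by applying Corollary~\ref{b212n0} with $S=S_7$ and with $S=B_2^1$, and the only input needed is $S_7\in\mathsf{V}(B_2^1)$, which your explicit subsemiring $\{0,a,1\}\subseteq B_2^1$ establishes correctly (the paper only asserts this embedding in the introduction). In a final write-up, simply drop the aborted attempt with $\{0,ab,1\}$.
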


\begin{proposition}\label{cacsemiring}
There exist $2^{\aleph_0}$ pairwise distinct ai-semirings $S$ such that the intervals
$[\mathsf{V}(S), \mathsf{V}(S^0)]$ have cardinality $2^{\aleph_0}$,
and the corresponding varieties $\mathsf{V}(S)$ are pairwise distinct.
\end{proposition}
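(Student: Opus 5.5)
We would take, for each subset $Q\subseteq P$ (with $Q$ nonempty, or allow empty), the ai-semiring
\[
S_Q \;=\; S_7\times \prod_{q\in Q} S_{\mathbb{H}_{3,q}},
\]
and in fact it is cleaner to use the variety $\mathcal{W}_Q=\mathsf{V}(\{S_7\}\cup\{S_{\mathbb{H}_{3,q}}\mid q\in Q\})$, which is generated by the single algebra $S_Q$ (a product of generators generates the join). By Proposition~\ref{pro25112801}, each $S_{\mathbb{H}_{3,q}}$ lies in $\mathsf{V}(S_7)$, so $\mathsf{V}(S_Q)=\mathsf{V}(S_7)$ for every $Q$. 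That would collapse all varieties, so this naive choice fails the "pairwise distinct varieties" requirement. Instead we must leave $\mathsf{V}(S_7)$ while staying inside $\mathcal{V}_k$ for some fixed $k\ge 3$, so that Theorem~\ref{s7ss0} still applies.

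The plan is therefore to use $S_Q=S_7\times\prod_{q\in Q}S^0_{\mathbb{H}_{k,q}}$ for a fixed $k\ge 3$, with $Q$ ranging over subsets of $P$ whose complement is infinite; for instance $Q\subseteq P_{\mathrm{even}}$, the primes of even index in an enumeration of $P$. This still gives $2^{\aleph_0}$ sets.

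First we check $\mathsf{V}(S_Q)\in[\mathsf{V}(S_7),\mathcal{V}_k]$. The lower bound is clear. For the upper bound we need $S^0_{\mathbb{H}_{k,q}}\in\mathcal{V}_k$, i.e.\ that identity~\eqref{thyq} holds for every prime $p$. Lemma~\ref{hkq0} gives this only for $p\neq q$, and it fails for $p=q$, so this choice is also wrong. We therefore shift to two disjoint families. We index the defining identities of $\mathcal{V}_k$ by primes, but keep the $S^0$-factors coming from a disjoint infinite set of primes and test with a modified variety. Concretely, we replace $\mathcal{V}_k$ by $\mathcal{V}_k^{R}$, defined by~\eqref{thyq} for $p\in R$ only, where $R=P\setminus P_{\mathrm{even}}$ is infinite. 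The proof of Theorem~\ref{s7ss0} works verbatim with $P$ replaced by $R$: the embedding $\psi$ is built from subsets of $R$, and Lemma~\ref{hkq0} separates them. So the interval condition holds for any $S$ with $\mathsf{V}(S)\in[\mathsf{V}(S_7),\mathcal{V}_k^R]$.

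Now for $Q\subseteq P_{\mathrm{even}}$, Lemma~\ref{hkq0} shows that each $S^0_{\mathbb{H}_{k,q}}$ with $q\in Q$ satisfies~\eqref{thyq} for all $p\in R$, since $p\neq q$. Hence $\mathsf{V}(S_Q)\in[\mathsf{V}(S_7),\mathcal{V}_k^R]$, and the interval $[\mathsf{V}(S_Q),\mathsf{V}(S_Q^0)]$ has cardinality $2^{\aleph_0}$.

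Next we show distinctness. If $Q\neq Q'$, say $p\in Q\setminus Q'$, then identity~\eqref{thyq} for this $p$ holds in $S_7$ and in every $S^0_{\mathbb{H}_{k,q}}$ with $q\in Q'$, by Lemma~\ref{hkq0}. It therefore holds in $S_{Q'}$, but it fails in the factor $S^0_{\mathbb{H}_{k,p}}$ of $S_Q$. So $\mathsf{V}(S_Q)\neq\mathsf{V}(S_{Q'})$, and in particular $S_Q\not\cong S_{Q'}$.

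The main obstacle is exactly the conflict noticed above: the semirings must escape $\mathsf{V}(S_7)$ to have distinct varieties, yet must remain in a variety to which the continuum argument applies. Splitting the primes into two infinite disjoint sets, one for the witnesses and one for the test identities, resolves it. The only verification needed is that the proof of Theorem~\ref{s7ss0} uses nothing about $P$ beyond its being an infinite set of primes, which is evident from the proof.
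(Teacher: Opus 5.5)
Your final construction is correct. It takes a genuinely different route from the paper at the point where you noticed the conflict: each witness factor $S^0_{\mathbb{H}_{k,q}}$ violates the test identity \eqref{thyq} for $p=q$.

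The paper uses $S_Q=S_7\times\prod_{q\in Q}S^0_{\mathbb{H}_{3,q}}$ for \emph{all} $Q\subseteq P$, and does not split the primes. Distinctness of the varieties $\mathsf{V}(S_Q)$ comes from the injectivity of $\psi$ in the proof of Theorem~\ref{s7ss0} with $k=3$, which is the same separation you give. For the interval property, the paper instead changes the uniformity: it tests with the identities defining $\mathcal{V}_4$ rather than $\mathcal{V}_3$. The multiplicative reduct of $S_{\mathbb{H}_{3,q}}$ is $4$-nilpotent and every word of $\mathbf{t}_{\mathbb{H}_{4,p}}$ has length $4$. Hence $S_{\mathbb{H}_{3,q}}$ satisfies $\mathbf{t}_{\mathbb{H}_{4,p}}\approx\mathbf{t}_{\mathbb{H}_{4,p}}+\mathbf{q}_{_{\mathbb{H}_{4,p}}}$ for every prime $p$, including $p=q$. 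Proposition~\ref{s0id}, with $D_{\mathbf{q}}(\mathbf{u})=\mathbf{t}_{\mathbb{H}_{4,p}}$, then lifts this to the $y^2$-identity in $S^0_{\mathbb{H}_{3,q}}$. So $\mathsf{V}(S_Q)\in[\mathsf{V}(S_7),\mathcal{V}_4]$, and Theorem~\ref{s7ss0} applies exactly as stated with $k=4$.

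In short, the paper separates witnesses from test identities by nilpotency degree. This keeps the full prime set and the theorem unchanged, at the cost of an extra nilpotency-plus-Proposition~\ref{s0id} step. You keep a single $k$ and separate them by two disjoint infinite sets of primes. Your route needs only Lemma~\ref{hkq0}, but it requires relativizing Theorem~\ref{s7ss0} to $\mathcal{V}_k^R$. That relativization is harmless, as you say: the injectivity argument for $\psi$ only uses that $S$ satisfies \eqref{thyq} for the primes indexing the subsets, and $\mathcal{P}(R)$ still has cardinality $2^{\aleph_0}$ because $R$ is infinite.
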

\begin{proof}
For each subset $Q$ of $P$, let $S_{Q}$ denote the direct product of
the semirings in $\{S_7\}\cup \{S_{\mathbb H_{3,q}}^0 \mid q \in Q\}$.
By the proof of Theorem~\ref{s7s70},
the collection $\{S_Q \mid Q \subseteq P\}$ forms a family of $2^{\aleph_0}$ semirings that generate pairwise distinct varieties.

To complete the proof, by Theorem~\ref{s7ss0},
it suffices to show that $\mathsf{V}(S_{Q})$ lies in the interval $[\mathsf{V}(S_7), \mathcal{V}_4]$.
Indeed, it is evident that $S_7$ is contained in $\mathsf{V}(S_{Q})$.
It remains to show that $S_{Q} \in \mathcal{V}_4$, i.e., $S_Q$ satisfies the identities
\begin{equation}\label{eq4p}
\mathbf{t}_{\mathbb{H}_{4,p}} + y^2 \approx \mathbf{t}_{\mathbb{H}_{4,p}} + y^2 + \mathbf{q}_{_{\mathbb{H}_{4,p}}}, \ p \in P.
\end{equation}
By Lemma~\ref{hkq0}, $S_7$ satisfies identities~\eqref{eq4p}.
Now let $q$ be an arbitrary prime.
Observe that $S_{\mathbb H_{3,q}}$ satisfies the identity
$\mathbf{t}_{\mathbb{H}_{4,p}} \approx \mathbf{t}_{\mathbb{H}_{4,p}} +\mathbf{q}_{_{\mathbb{H}_{4,p}}}$,
since the multiplicative reduct of $S_{\mathbb H_{3,q}}$ is a $4$-nilpotent semigroup,
and every word in $\mathbf{t}_{\mathbb{H}_{4,p}}$ has length $4$.
Note that $y$ does not occur in $\mathbf{t}_{\mathbb{H}_{4,p}}$.
By Proposition~\ref{s0id}, $S_{\mathbb H_{3,q}}^0$ also satisfies identities~\eqref{eq4p}.
Thus $S_{Q}$ satisfies identities~\eqref{eq4p} and so $S_{Q}$ belongs to $\mathcal{V}_4$.
This proves the required result.
\end{proof}

\section{A sufficient condition for the nonfinitely based property}\label{sec:NFB}
In this section, we present a sufficient condition for an ai-semiring variety to be nonfinitely based.
Analogous to the Kneser hypergraph semirings in Section~3,
we first define a hypergraph semiring $S_{\mathbb{H}}$ for any $k$-uniform hypergraph $\mathbb{H}$
with sufficiently large girth,
whose multiplication yields a commutative $0$-cancellative semigroup that is $(k+1)$-nilpotent.
These semirings were introduced by Jackson et al.~\cite{jrz}
and have proven to be a very useful tool in the study of the finite basis problem (see \cite{aj, gjrz, gr}).
The core idea of the construction is to use the vertices of $\mathbb{H}$ as generators and define multiplication so that it reflects the hyperedge structure:
for every hyperedge, the product of all its vertices yields a fixed common element,
while the product of
any set of vertices that is not a subhyperedge is set to zero.
We begin by reviewing the essential background on these semirings.

A \emph{cycle} in a hypergraph is an alternating sequence
$v_0, e_1, v_1, e_2, v_2, \dots, e_n, v_n$
with $n \ge 2$, $v_0 = v_n$, where $v_0, v_1, \dots, v_{n-1}$ are distinct vertices,
$e_1, \dots, e_n$ are distinct hyperedges, and $\{v_{i-1}, v_i\} \subseteq e_i$ for all $1\leq i \leq n$.
The \emph{length} of this cycle is $n$.
The \emph{girth} of a hypergraph is the length of its shortest cycle,
or infinite if no cycle exists.

Let $\mathbb{H} = (V, E)$ be a $k$-uniform hypergraph with $k \geq 3$,
having no isolated vertices and girth greater than $4$.
The girth condition guarantees that any two distinct hyperedges of $\mathbb{H}$
share at most one vertex, and that a set of vertices of $\mathbb{H}$ is a subhyperedge
if and only if all its $2$-element subsets are subhyperedges (see~\cite[Lemma 3.2]{jrz}).
The \emph{hypergraph semiring} $S_{\mathbb{H}}$ defined by $\mathbb{H}$
is a flat semiring generated by a set $\{\mathbf{a}_v \mid v \in V\}$ in one-to-one correspondence with $V$,
together with a special element~$0$, subject to the following relations:
\begin{itemize}
\item[$(1)$] $0$ is the multiplicative zero.

\item[$(2)$] $\mathbf{a}_u\mathbf{a}_v=\mathbf{a}_v\mathbf{a}_u$ for all $u, v\in V$.

\item[$(3)$] $\mathbf{a}_u\mathbf{a}_v=0$ if $u=v$ or $\{u, v\}$ is not a subhyperedge.

\item[$(4)$] $\mathbf{a}_{u_1}\mathbf{a}_{u_2}\cdots \mathbf{a}_{u_k}=\mathbf{a}_{v_1}\mathbf{a}_{v_2}\cdots \mathbf{a}_{v_k}$
for all $\{u_1, \dots, u_k\}, \{v_1, \dots, v_k\} \in E$; we denote this common value by $\mathbf{a}$.

\item[$(5)$] $\mathbf{a}_{u_1}\mathbf{a}_{u_2}\cdots \ba_{u_{k-1}}
=\mathbf{a}_{v_1}\mathbf{a}_{v_2}\cdots \ba_{v_{k-1}}$ whenever there exists $w \in V$ such that
both $\{u_1, \dots, u_{k-1}, w\}$ and $\{v_1, \dots, v_{k-1}, w\}$ belong to $E$.
\end{itemize}

Let $m\geq 2$ be an integer.
A hypergraph  $\mathbb{H}=(V, E)$ is \emph{$m$-colourable} if there exists a mapping $\varphi: V \to \{1,2,\ldots,m\}$
such that $|\varphi(e)|\geq 2$ for all $e\in E$ with $|e|\geq 2$.
For all integers $k, m, n \geq 3$, we fix a $k$-uniform hypergraph
$\mathbb{H}_{n}^{(k, m)}=(V_n^{(k, m)}, E_n^{(k, m)})$
that has girth greater than $k\binom{kn}{2}$ and is not $m$-colourable.
The existence of such a hypergraph is guaranteed by a result of Erd\"{o}s and Hajnal~\cite{erdhaj}.
For convenience, we denote $\mathbb{H}_{n}^{(k, m)}$ simply by $\mathbb{H}_n =(V_n, E_n)$
when $k$ and $m$ are clear from the context.

Let $k, m\geq 3$ be an integer, and let $\mathcal{W}_{k,m}$ denote the ai-semiring variety defined by the identities
\begin{equation}
\bt_{\mathbb{H}_n}  \approx \bt_{\mathbb{H}_n}+{\bf q}_{_{\mathbb H_n}},\quad n \geq 3.\label{id112500}
\end{equation}
From the proof of \cite[Corollary 2.5]{gjrz}, we have that $\mathcal{W}_{k,m}$ contains $S_7^0$,
and consequently $S_c(a_1a_2\cdots a_k)\in \mathcal{W}_{k,m}$.
The following theorem generalizes \cite[Theorem 2.2]{gjrz}.
While the proof strategy broadly follows that of the cited theorem,
we provide a comprehensive account below to ensure self-containment and enhance readability.

\begin{thm}\label{thm251117}
Let $k, m\geq 3$ be an integer.
Then every variety in the interval $[\mathsf{V}(S_c(a_1\cdots a_k)), \mathcal{W}_{k,m}]$ is nonfinitely based.
\end{thm}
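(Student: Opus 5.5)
The plan is to follow the standard compactness argument used in \cite[Theorem 2.2]{gjrz}. Suppose, for a contradiction, that some variety $\mathcal{V}$ in $[\mathsf{V}(S_c(a_1\cdots a_k)), \mathcal{W}_{k,m}]$ has a finite basis $\Sigma$. Let $N$ exceed the number of variables occurring in every identity of $\Sigma$. Since $\mathcal{V}\subseteq\mathcal{W}_{k,m}$, the identity $\bt_{\mathbb{H}_n}\approx\bt_{\mathbb{H}_n}+\bq_{_{\mathbb{H}_n}}$ holds in $\mathcal{V}$ for every $n\ge 3$; fix $n$ large relative to $N$ (for instance $kn>N$). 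The aim is to exhibit an ai-semiring in $\mathsf{V}(S_c(a_1\cdots a_k))$-extended-by-$\Sigma$, i.e.\ a model of $\Sigma$, in which this identity fails. Concretely, I would show that the hypergraph semiring $S_{\mathbb{H}_n}$ satisfies every identity of $\Sigma$ in at most $N$ variables that holds in $S_c(a_1\cdots a_k)$ (in fact every such identity that holds in $\mathcal{V}$), while $S_{\mathbb{H}_n}$ violates $\bt_{\mathbb{H}_n}\approx\bt_{\mathbb{H}_n}+\bq_{_{\mathbb{H}_n}}$.

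The failure is the easy step: send $x_v\mapsto \mathbf{a}_v$. Every summand of $\bt_{\mathbb{H}_n}$ evaluates to $\mathbf{a}\ne 0$, so $\bt_{\mathbb{H}_n}\mapsto\mathbf{a}$, whereas $\bq_{_{\mathbb{H}_n}}$ is a product of $|V_n|>k$ generators, hence $0$ by $(k+1)$-nilpotency, giving $\mathbf{a}\neq\mathbf{a}+0=0$.

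The main step is the local satisfaction claim: every identity $\bu\approx\bu+\bp$ in at most $N$ variables holding in $\mathcal{V}$ holds in $S_{\mathbb{H}_n}$. Take a substitution $\theta$ into $S_{\mathbb{H}_n}$ with $\theta(\bu)\ne 0$ (otherwise both sides are $0$, the top). The values $\theta(x)$ are finitely many nonzero products of generators involving at most $N\cdot k$ vertices in total; because the girth of $\mathbb{H}_n$ exceeds $k\binom{kn}{2}$, the sub-hypergraph spanned by the hyperedges touched by these vertices is a hyperforest (acyclic). I would then show, as in \cite{gjrz,jrz}, that the subsemiring generated by these values embeds into (or is a homomorphic image of a subsemiring of) a finite power of $S_c(a_1\cdots a_k)$: acyclicity allows one to assign to each vertex, for each hyperedge in the forest, a coordinate letter $a_i$ compatibly, so that relations $(2)$--$(5)$ are respected and non-subhyperedge products go to $0$. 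Since $\bu\approx\bu+\bp$ holds in $S_c(a_1\cdots a_k)\in\mathcal{V}$, it then holds under $\theta$. This forest-embedding is the part I expect to be the obstacle; I would try first to reduce to a single hyperedge component using that $\theta(\bu)\neq 0$ forces each summand to evaluate inside one hyperedge's subsemiring (products across two hyperedges meeting in one vertex vanish unless inside one hyperedge), and for a single hyperedge $S_c(a_1\cdots a_k)$ itself essentially is the relevant local semiring, with condition $(5)$ handled by the fact that $\mathbf{a}/\mathbf{a}_w$ is determined by $w$.

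Where the non-$m$-colourability enters: the case $\theta(\bu)=\mathbf{a}$ with $\bu$ a ``hyperedge-covering'' term may need the colouring argument to rule out $\bp$-values not captured locally; I would use that an identity of $\mathcal{V}$ in fewer variables than needed to cover $\mathbb{H}_n$ yields an $m$-colouring-type homomorphism otherwise, mirroring \cite[Theorem 2.2]{gjrz}. Combining, $S_{\mathbb{H}_n}$ models $\Sigma$ hence lies in $\mathcal{V}\subseteq\mathcal{W}_{k,m}$, contradicting the failure above.
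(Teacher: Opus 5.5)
Your skeleton is the paper's proof. Under $x_v\mapsto\mathbf{a}_v$, $S_{\mathbb{H}_n}$ violates \eqref{id112500}, and your computation of this is right. On the other hand, every identity of $\mathcal{V}$ in few variables holds in $S_{\mathbb{H}_n}$, because its finitely generated subsemirings lie in $\mathsf{V}(S_c(a_1\cdots a_k))\subseteq\mathcal{V}$. So $S_{\mathbb{H}_n}$ would model any finite basis. The paper does not reprove this local fact. It imports it from the proof of \cite[Theorem 4.9]{jrz}, which is stated for $n$-generated subsemirings of $S_{\mathbb{H}_n}$ (that is what the girth bound $k\binom{kn}{2}$ is tailored to). If you quote it verbatim, choose $n\ge N$, not merely $kn>N$. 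Drop your last paragraph: non-$m$-colourability is never used in this theorem. It serves only to place $S_c(a_1\cdots a_k)$, $B_2^1$, $S_{\mathbb{H}}$, etc.\ inside $\mathcal{W}_{k,m}$.

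If you want to prove the local step yourself, two parts of your sketch fail as written.

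First, the reduction to a single hyperedge does not work. The condition $\theta(\bu)\neq 0$ only forces all summands of $\bu$ to take one common value $s$. For $s=\mathbf{a}$ (or $s=\mathbf{a}/\mathbf{a}_w$), different summands may be realised in different hyperedges, so $\bp$ can multiply values coming from different hyperedges. This cross-hyperedge gluing is exactly what makes $S_{\mathbb{H}_n}$ violate \eqref{id112500}, so it must be controlled globally by acyclicity.

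Second, there is no embedding into a power of $S_c(a_1\cdots a_k)$. Any coordinate in which $\mathbf{a}$ is nonzero must be a rainbow colouring, meaning every hyperedge receives $k$ distinct letters. Take hyperedges $\{1,2,3\}$ and $\{3,4,5\}$ with $k=3$. Vertex $1$ then gets a letter different from that of $4$ or of $5$, so the relation $\mathbf{a}_1\mathbf{a}_4=0$ or $\mathbf{a}_1\mathbf{a}_5=0$ is violated in that coordinate.

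Your fallback, ``homomorphic image of a subsemiring of a power'', does work. Let $T$ be the subsemiring generated by the tuples $(\chi(v))_\chi$, with $\chi$ ranging over all rainbow colourings of the forest. The tuples having a zero coordinate form an ideal $I$ with $I+T\subseteq I$, so collapsing $I$ to a point is a congruence. One checks that the quotient is the forest's hypergraph semiring, which gives membership in $\mathsf{HSP}(S_c(a_1\cdots a_k))$.
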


\begin{proof}
Let $\mathcal{V}$ be an arbitrary variety in the interval $[\mathsf{V}(S_c(a_1\cdots a_k)), \mathcal{W}_{k,m}]$.
The proof proceeds as follows.
We show that for every $n \geq 3$, the set of all $n$-variable identities of $\mathcal{V}$
does not form a basis for the equational theory of $\mathcal{V}$.
To establish this,
it suffices to prove that for every $n \geq 3$, the hypergraph semiring $S_{\mathbb{H}_n}$ does not lie in $\mathcal{V}$,
while every $n$-generated subsemiring of $S_{\mathbb{H}_n}$ does lie in $\mathcal{V}$.

Let $n \geq 3$.
From the proof of \cite[Theorem 4.9]{jrz},
we know that every $n$-generated subalgebra $T$ of $S_{\mathbb{H}_n}$
lies in the variety $\mathsf{V}(S_c(a_1 \cdots a_k))$.
Since $\mathsf{V}(S_c(a_1 \cdots a_k))$ is a subvariety of $\mathcal{V}$,
it follows that $T\in \mathcal{V}$.

Now consider the natural substitution
$\varphi\colon \{x_v\mid v\in V_n\} \to S_{\mathbb{H}_n}$ defined by $\varphi(x_v) = \mathbf{a}_v$ for all $v \in V_n$.
It is easy to see that $\varphi(\mathbf{t}_{\mathbb{H}_n}) = \mathbf{a} $ and $\varphi(\mathbf{q}_{_{\mathbb H_n}})=0$.
This shows that $S_{\mathbb{H}_n} $ does not satisfy the identity~\eqref{id112500}.
Since the identity~\eqref{id112500} holds in $\mathcal{W}_{k,m}$
and $\mathcal{V}$ is a subvariety of $\mathcal{W}_{k,m}$,
it follows that $\mathcal{V}$ satisfies the identity~\eqref{id112500}.
Consequently, $S_{_{\mathbb{H}_n}} \notin \mathcal{V}$.


We have thus shown that for every $n \geq 2 $, all $n $-generated subalgebras of $S_{\mathbb{H}_n} $ belong to $\mathcal{V} $, while $ S_{\mathbb{H}_n} $ itself does not.
Therefore, $\mathcal{V} $ is nonfinitely based.
\end{proof}


\begin{cor}\label{coro25120101}
Every variety in the interval $[\mathsf{V}(S_c(abc)),\mathsf{V}((B_2^1)^0)]$ is nonfinitely based.
\end{cor}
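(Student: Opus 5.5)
We deduce this from Theorem~\ref{thm251117} with $k=3$ and some fixed $m\geq 3$, say $m=3$. Since $S_c(abc)$ is isomorphic to $S_c(a_1a_2a_3)$, it suffices to show that $(B_2^1)^0\in\mathcal{W}_{3,3}$. The whole interval $[\mathsf{V}(S_c(abc)),\mathsf{V}((B_2^1)^0)]$ then lies inside $[\mathsf{V}(S_c(a_1a_2a_3)),\mathcal{W}_{3,3}]$, and every variety there is nonfinitely based. So the task reduces to checking that $(B_2^1)^0$ satisfies $\bt_{\mathbb{H}_n}\approx\bt_{\mathbb{H}_n}+\bq_{_{\mathbb{H}_n}}$ for every $n\geq 3$, where $\mathbb{H}_n=\mathbb{H}_n^{(3,3)}$.

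For this we use Proposition~\ref{s0id}. Every variable of $\bt_{\mathbb{H}_n}$ occurs in $\bq_{_{\mathbb{H}_n}}$, because $\mathbb{H}_n$ has no isolated vertices. Hence $c(\bt_{\mathbb{H}_n})= c(\bq_{_{\mathbb{H}_n}})$, and it suffices to verify the identity in $B_2^1$ itself. Take any substitution $\varphi$ into $B_2^1$. If $\varphi(\bt_{\mathbb{H}_n})=0$, both sides equal $0$, since $0$ is the top of $B_2^1$. Otherwise $\varphi(\bt_{\mathbb{H}_n})\neq 0$, so every hyperedge product $\varphi(x_ux_vx_w)$, taken in all orders, is a nonzero element, and the sum of these values is nonzero. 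The nonzero sums of elements in Figure~\ref{fig1} are $1$, $a$, $b$, $ab$, $ba$, and the only nontrivial joins are $1+ab=ab$ and $1+ba=ba$. The nonzero value of $\varphi(\bt_{\mathbb{H}_n})$ must also be a value of a product of three elements equal in all six orders.

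We then argue as in the proof of Corollary~\ref{b212n0}, using the colouring hypothesis. Each hyperedge $\{u,v,w\}$ has all six products $\varphi(x_u)\varphi(x_v)\varphi(x_w)$ nonzero, and the summands all lie in $\{1,ab\}$, in $\{1,ba\}$, or are a single common element among $a$, $b$. A short check in the Brandt monoid shows that every permutation product of $s,t,r$ is nonzero only if each of $s,t,r$ is $1$ or a common idempotent $e\in\{ab,ba\}$: any occurrence of $a$ or $b$ in some order yields $0$ in another order, and $ab\cdot ba=0$. So all vertices in the hyperedge map into $\{1,e\}$. Since $\mathbb{H}_n$ need not be connected, different components might use different idempotents $ab$ and $ba$. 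But the total sum is nonzero, and $ab+ba=0$, so a single $e$ (or only $1$) is used globally. Hence every $\varphi(x_v)$ lies in $\{1,e\}$, so $\varphi(\bq_{_{\mathbb{H}_n}})\in\{1,e\}$. This product equals $e$ exactly when some vertex maps to $e$, and then some hyperedge product equals $e$. Thus $\varphi(\bt_{\mathbb{H}_n})+\varphi(\bq_{_{\mathbb{H}_n}})=\varphi(\bt_{\mathbb{H}_n})$, and the identity holds in $B_2^1$, hence in $(B_2^1)^0$.

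The main obstacle is this combinatorial check in $B_2^1$: ruling out hyperedges whose images involve $a$ or $b$ while all permutation products stay nonzero. Note that the non-$m$-colourability of $\mathbb{H}_n$ seems not even to be needed here, only the commutativity forced by all orderings. If the case analysis is more delicate than expected, the fallback is to show that any $3$ elements of $B_2^1$ whose six permutation products are all nonzero lie in $\{1,ab\}$ or $\{1,ba\}$, by a finite enumeration.
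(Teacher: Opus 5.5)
Your reduction is the same as the paper's: Theorem~\ref{thm251117} with $k=m=3$, then Proposition~\ref{s0id} with $c(\bt_{\mathbb{H}_n})=c(\bq_{_{\mathbb{H}_n}})$ to pass to $B_2^1$, then a case analysis of $\varphi(\bt_{\mathbb{H}_n})$. Your treatment of the values $0,1,ab,ba$ is fine.

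The gap is the ``short check'' in the Brandt monoid, which is false. For the triple $(1,1,a)$ all six permutation products equal $a\neq 0$, and similarly for $(1,1,b)$. The triple $(1,a,b)$ also has all six products nonzero, though that one is harmless because $ab+ba=0$. Once the join of the six products is required to be nonzero, the surviving triples are $\{1,1,1\}$, $\{1,1,x\}$ with $x\in\{a,b,ab,ba\}$, and $\{1,e,e\}$, $\{e,e,e\}$ with $e\in\{ab,ba\}$. So the case $\varphi(\bt_{\mathbb{H}_n})=a$ (or $b$) is never disposed of, even though you list it yourself one sentence earlier. No finite enumeration inside $B_2^1$ can dispose of it, and your fallback would fail for the same reason.

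This is exactly where the colouring hypothesis is needed, contrary to your closing remark. Suppose $\varphi(\bt_{\mathbb{H}_n})=a$. Since $a$ is minimal in $(B_2^1,\leq)$, every summand equals $a$, so every hyperedge is mapped onto $\{1,1,a\}$. The set of vertices sent to $a$ then meets each hyperedge in exactly one vertex, which gives a $2$-colouring of $\mathbb{H}_n$. That is impossible, because $\mathbb{H}_n=\mathbb{H}_n^{(3,3)}$ is not $3$-colourable and hence not $2$-colourable; this is the paper's argument.

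Without that hypothesis the identity genuinely fails in $B_2^1$. Take the $3$-uniform hypergraph $\mathbb{H}$ with two disjoint hyperedges $\{1,2,3\}$ and $\{4,5,6\}$. Send $x_1,x_4\mapsto a$ and all other variables to $1$. Then $\varphi(\bt_{\mathbb{H}})=a$, but $\varphi(\bq_{_{\mathbb{H}}})=0$ because $a\cdot a=0$. Hence $\varphi(\bt_{\mathbb{H}}+\bq_{_{\mathbb{H}}})=0\neq a$.

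Adding the $\{1,1,a\}$ and $\{1,1,b\}$ case, and excluding it by non-$2$-colourability, repairs your proof and makes it coincide with the paper's.
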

\begin{proof}
Since $S_c(abc) \in \mathsf{V}(S_7)$ and $S_7\in \mathsf{V}(B_2^1)$, we obtain that $S_c(abc) \in \mathsf{V}(B_2^1)$,
and so $S_c(abc) \in \mathsf{V}((B_2^1)^0)$.
By Theorem~\ref{thm251117},
it suffices to show that $(B_2^1)^0$ satisfies the identities~\eqref{id112500} for each $n \geq 3$,
where $k=m=3$.
Since $c(\bt_{\mathbb{H}_n})=c({\bf q}_{_{\mathbb H_n}})$,
Proposition~\ref{s0id} implies that we need only prove that the identities~\eqref{id112500} holds in $B_2^1$.

For any $n \geq 3$, let $\varphi\colon \{x_v \mid v \in V_n\} \to B_2^1$ be an arbitrary substitution.
We first show that $\varphi(\mathbf{t}_{\mathbb{H}_n}) \neq a$ and $\varphi(\mathbf{t}_{\mathbb{H}_n}) \neq b$.
Suppose, for contradiction, that $\varphi(\mathbf{t}_{_{\mathbb{H}_n}}) = a$.
Since $a$ is minimal in $(B_2^1, \leq)$,
it follows that $\varphi(x_{u_1}x_{u_2}x_{u_3}) = a$ for every hyperedge $\{u_1, u_2, u_3\} \in E_n$.
This implies that $\varphi(x_{u_{i_1}})\varphi(x_{u_{i_2}})\varphi(x_{u_{i_3}}) = a$
for all $\{i_1,i_2,i_3\} = \{1,2,3\}$.
So $\{\varphi(x_{u_1}), \varphi(x_{u_2}), \varphi(x_{u_3})\}=\{1, 1, a\}$.
Consequently, $\mathbb{H}_n$ is $2$-colourable, which contradicts the choice of $\mathbb{H}_n$.
Therefore, $\varphi(\mathbf{t}_{\mathbb{H}_n}) \neq a$.
A similar argument shows that $\varphi(\mathbf{t}_{\mathbb{H}_n}) \neq b$.

Now assume that $\varphi(\mathbf{t}_{\mathbb{H}_n}) \in \{0, 1, ab, ba\}$.
In this case, the proof of Corollary~\ref{b212n0} shows that
$\varphi(\mathbf{t}_{\mathbb{H}_n}) = \varphi(\mathbf{t}_{\mathbb{H}_n} + \mathbf{q}_{_{\mathbb{H}_n}})$.
Consequently, $B_2^1$ satisfies identity~\eqref{id112500} for every $n \geq 3$.
\end{proof}

\noindent\textbf{Remark}
Corollary~\ref{coro25120101} extends Corollary~\cite[Corollary 2.5]{gjrz},
which asserts that every variety in the interval $[\mathsf{V}(S_c(abc)),\mathsf{V}(S_7^0)]$ is nonfinitely based.

\begin{cor}
Every variety in the interval $[\mathsf{V}(S_7), \mathsf{V}((B_2^1)^0)]$ is nonfinitely based.
\end{cor}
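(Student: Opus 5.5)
The plan is to reduce the statement to Corollary~\ref{coro25120101}. The interval $[\mathsf{V}(S_7), \mathsf{V}((B_2^1)^0)]$ should be contained in $[\mathsf{V}(S_c(abc)), \mathsf{V}((B_2^1)^0)]$. This holds once we know $S_c(abc)\in\mathsf{V}(S_7)$, because then $\mathsf{V}(S_c(abc))\subseteq\mathsf{V}(S_7)$. That membership is given by Proposition~\ref{sca1ak} with $k=3$, since $S_c(abc)$ is, up to renaming letters, $S_c(a_1a_2a_3)$.

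So let $\mathcal{V}$ be any variety with $\mathsf{V}(S_7)\subseteq \mathcal{V}\subseteq \mathsf{V}((B_2^1)^0)$. Then $\mathsf{V}(S_c(abc))\subseteq \mathsf{V}(S_7)\subseteq\mathcal{V}$. The interval is well defined because $S_7\in\mathsf{V}(B_2^1)\subseteq\mathsf{V}((B_2^1)^0)$: $B_2^1$ contains a copy of $S_7$, and $B_2^1$ is a subsemiring of $(B_2^1)^0$. Hence $\mathcal{V}\in[\mathsf{V}(S_c(abc)),\mathsf{V}((B_2^1)^0)]$, and Corollary~\ref{coro25120101} shows that $\mathcal{V}$ is nonfinitely based.

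There is essentially no obstacle; the only point to check is the identification $S_c(abc)\cong S_c(a_1a_2a_3)$, which is immediate from the definition of $S_c(\bw)$ for a linear word. As a remark, taking $\mathcal{V}=\mathsf{V}((B_2^1)^0)$ recovers Dolinka's theorem, and taking $\mathcal{V}=\mathsf{V}(S_7^0)$ recovers the result of \cite{wrz}.
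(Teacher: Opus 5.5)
Your proof is correct and is the paper's argument: since $S_c(abc)\in\mathsf{V}(S_7)$ by Proposition~\ref{sca1ak} with $k=3$, the interval $[\mathsf{V}(S_7),\mathsf{V}((B_2^1)^0)]$ lies inside $[\mathsf{V}(S_c(abc)),\mathsf{V}((B_2^1)^0)]$, and Corollary~\ref{coro25120101} then applies. Your additional check that $S_7\in\mathsf{V}((B_2^1)^0)$ is not needed for the conclusion, but it does no harm.
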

\begin{proof}
Since $\mathsf{V}(S_7) \in [\mathsf{V}(S_c(abc)),\mathsf{V}((B_2^1)^0)]$, the conclusion follows immediately from Corollary~\ref{coro25120101}.
\end{proof}

\begin{cor}
The $7$-element ai-semiring $(B_2^1)^0$ is nonfinitely based.
\end{cor}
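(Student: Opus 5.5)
This is an immediate specialization of the preceding corollaries. The plan is to observe that $\mathsf{V}((B_2^1)^0)$ itself lies in the interval $[\mathsf{V}(S_7), \mathsf{V}((B_2^1)^0)]$, and then apply the previous corollary (equivalently, Corollary~\ref{coro25120101}). Membership in the interval requires two inclusions, which I will check in turn.

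\textbf{Upper bound.} The inclusion $\mathsf{V}((B_2^1)^0)\subseteq \mathsf{V}((B_2^1)^0)$ is trivial.

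\textbf{Lower bound.} I need $S_7\in\mathsf{V}((B_2^1)^0)$. Section~1 notes that $B_2^1$ contains a copy of $S_7$; explicitly, $\{0,a,1\}$ is closed under both operations, with $a+1=0$ and $a^2=0$. Hence $S_7\in\mathsf{V}(B_2^1)$. Since $B_2^1$ is a subsemiring of $(B_2^1)^0$, we also have $\mathsf{V}(B_2^1)\subseteq \mathsf{V}((B_2^1)^0)$. Together these give $\mathsf{V}(S_7)\subseteq\mathsf{V}((B_2^1)^0)$.

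\textbf{Conclusion.} By the preceding corollary, every variety in $[\mathsf{V}(S_7),\mathsf{V}((B_2^1)^0)]$ is nonfinitely based. In particular $\mathsf{V}((B_2^1)^0)$ is nonfinitely based, which by definition means $(B_2^1)^0$ is nonfinitely based. As a consistency check on cardinality, $|B_2^1|=6$, so $(B_2^1)^0$ has $7$ elements.

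There is no real obstacle here; all the substantive work was done in Corollary~\ref{coro25120101}. The only thing to confirm is that $S_7$ embeds into $B_2^1$, which is stated earlier in the paper and can be checked directly as above.
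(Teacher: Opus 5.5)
Your proposal is correct and follows essentially the same route as the paper, which simply observes that $\mathsf{V}((B_2^1)^0)$ is the upper endpoint of the interval in Corollary~\ref{coro25120101}. The only difference is that you go through the intermediate corollary with lower endpoint $\mathsf{V}(S_7)$ and check directly that $\{0,a,1\}\subseteq B_2^1\subseteq (B_2^1)^0$ is a copy of $S_7$, whereas the paper uses the lower endpoint $\mathsf{V}(S_c(abc))$, whose membership in $\mathsf{V}((B_2^1)^0)$ was already established in the proof of Corollary~\ref{coro25120101}.
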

\begin{proof}
This is a direct consequence of Corollary~\ref{coro25120101}.
\end{proof}

\begin{corollary}\label{coro25120450}
Let $k, m\geq 3$ be an integer.
If $S$ is an ai-semiring such that $\mathsf{V}(S)\in [\mathsf{V}(S_c(a_1\cdots a_k)), \mathcal{W}_{k, m}]$,
then $S$ and $S^0$ are both nonfinitely based.
\end{corollary}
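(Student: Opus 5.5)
The plan is to reduce both claims to Theorem~\ref{thm251117}: we check that $\mathsf{V}(S)$ and $\mathsf{V}(S^0)$ each lie in the interval $[\mathsf{V}(S_c(a_1\cdots a_k)), \mathcal{W}_{k,m}]$. For $S$ this is exactly the hypothesis, so Theorem~\ref{thm251117} immediately gives that $S$ is nonfinitely based. All the work is in showing that $S^0$ also lies in this interval.

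\textbf{Lower bound.} Since $S$ is a subsemiring of $S^0$, we have $\mathsf{V}(S_c(a_1\cdots a_k)) \subseteq \mathsf{V}(S) \subseteq \mathsf{V}(S^0)$. So the lower bound is automatic.

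\textbf{Upper bound.} We need $S^0 \in \mathcal{W}_{k,m}$, that is, $S^0$ satisfies $\bt_{\mathbb{H}_n} \approx \bt_{\mathbb{H}_n} + \mathbf{q}_{_{\mathbb{H}_n}}$ for all $n \geq 3$. Two observations do this.
\begin{enumerate}[$(1)$]
\item Every variable of $\mathbb{H}_n$ occurs in some hyperedge, because the hypergraph has no isolated vertices. Hence $c(\bt_{\mathbb{H}_n}) = c(\mathbf{q}_{_{\mathbb{H}_n}})$.
\item The identity is of the form $\bu \approx \bu + \bq$ with $c(\bu) \subseteq c(\bq)$. The last clause of Proposition~\ref{s0id} (equivalently, Corollary~\ref{ss0uuq} applied with $\mathcal{V} = \mathsf{V}(S)$) then says that $S^0$ satisfies it if and only if $S$ does.
\end{enumerate}
Since $\mathsf{V}(S) \subseteq \mathcal{W}_{k,m}$, the semiring $S$ satisfies all the identities~\eqref{id112500}, and therefore so does $S^0$. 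Thus $\mathsf{V}(S^0) \subseteq \mathcal{W}_{k,m}$.

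This is the same argument already used for $(B_2^1)^0$ in the proof of Corollary~\ref{coro25120101}, so I do not expect any real obstacle. The only point that needs care is the content equality in (1). It relies on the convention that the $\mathbb{H}_n$ have no isolated vertices, which was assumed in the hypergraph semiring construction and which we may impose on the fixed $\mathbb{H}_n$ by deleting isolated vertices. Deleting them does not affect girth or non-$m$-colourability. Combining the lower and upper bounds with Theorem~\ref{thm251117}, $S^0$ is nonfinitely based.
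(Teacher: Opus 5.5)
Your proposal is correct and follows the paper's proof: $S$ satisfies the identities \eqref{id112500}, Proposition~\ref{s0id} transfers them to $S^0$, $S \le S^0$ gives the lower bound, and Theorem~\ref{thm251117} finishes. Your caveat about isolated vertices is harmless but unnecessary, since the last clause of Proposition~\ref{s0id} only needs $c(\bt_{\mathbb{H}_n})\subseteq c(\bq_{\mathbb{H}_n})$, and this holds automatically because $\bq_{\mathbb{H}_n}$ contains every variable $x_v$.
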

\begin{proof}\label{coro25113001}
By assumption, $S$ satisfies the identities~\eqref{id112500}.
Since $c(\bt_{\mathbb{H}_n})=c({\bf q}_{_{\mathbb H_n}})$,
Proposition~\ref{s0id} implies that $S^0$ also satisfies identities~\eqref{id112500}.
Hence, $\mathsf{V}(S^0)$ lies in the interval $[\mathsf{V}(S_c(a_1\cdots a_k)), \mathcal{W}_{k, m}]$.
Applying Theorem~\ref{thm251117}, we conclude that $S$ and $S^0$ are both nonfinitely based.
\end{proof}

\begin{corollary}\label{coro251202}
Let $k \geq 3$ be an integer, and let $\mathbb H$ be a $k$-uniform hypergraph.
Then $S_{\mathbb H}$ and $S_{\mathbb H}^0$ are both nonfinitely based.
\end{corollary}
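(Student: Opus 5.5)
The plan is to deduce the statement from Corollary~\ref{coro25120450}. For this I need some $m\geq 3$ with
\[
\mathsf{V}(S_{\mathbb H})\in[\mathsf{V}(S_c(a_1\cdots a_k)),\mathcal{W}_{k,m}].
\]
The statement is silent on girth, but the semiring $S_{\mathbb H}$ has only been defined for $k$-uniform hypergraphs with no isolated vertices and girth greater than $4$. I will take these as the standing hypotheses on $\mathbb H$. I also assume $\mathbb H$ has at least one hyperedge, so that $S_{\mathbb H}$ is nontrivial.

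\emph{Lower bound.} I will show that $S_c(a_1\cdots a_k)$ embeds in $S_{\mathbb H}$. Fix a hyperedge $e=\{u_1,\dots,u_k\}$ and let $T$ be the subsemiring generated by $\mathbf a_{u_1},\dots,\mathbf a_{u_k}$. For each nonempty $A\subseteq\{1,\dots,k\}$, the products $\prod_{i\in A}\mathbf a_{u_i}$ are nonzero. Repeated indices give $0$ by relation~(3).

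The point to check is that distinct subsets $A$ give distinct products. For $|A|<k-1$ this follows from the normal form for $S_{\mathbb H}$ established in \cite{jrz}. For $|A|=k-1$, relation~(5) identifies two such products only when the corresponding $(k-1)$-sets extend to hyperedges through a common vertex $w$. Inside a single hyperedge this would force two distinct hyperedges to share $k-1\geq 2$ vertices, which the girth condition forbids.

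Granting this, the map $\prod_{i\in A}a_i\mapsto\prod_{i\in A}\mathbf a_{u_i}$, with $0\mapsto 0$, is an isomorphism from $S_c(a_1\cdots a_k)$ onto $T$. Both semirings are flat, so addition is respected automatically. This gives $S_c(a_1\cdots a_k)\in\mathsf{V}(S_{\mathbb H})$.

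\emph{Upper bound.} Now choose $m$ so large that $\mathbb H$ is $m$-colourable; for instance $m=\max(3,|V|)$, colouring vertices injectively, works if $\mathbb H$ is finite. I must show that $S_{\mathbb H}$ satisfies $\bt_{\mathbb H_n}\approx\bt_{\mathbb H_n}+\bq_{_{\mathbb H_n}}$ for every $n\geq 3$. Since $S_{\mathbb H}$ is flat with top element $0$, the only substitutions that could violate this identity are those $\varphi$ with $\varphi(\bt_{\mathbb H_n})\neq 0$. For such $\varphi$, every word $x_{v_1}\cdots x_{v_k}$ with $\{v_1,\dots,v_k\}\in E_n$ takes the same nonzero value $c$, because in a flat semiring a sum is nonzero only if all its summands are equal and nonzero. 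On the other hand, $\varphi(\bq_{_{\mathbb H_n}})$ is a product of $|V_n|>k$ elements, and products of more than $k$ generators vanish by $(k+1)$-nilpotency. So I need to rule out $c\neq 0$ entirely.

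This is the main obstacle. My approach is to argue that $c\neq 0$ produces a hypergraph homomorphism $\mathbb H_n\to\mathbb H$, or at least a proper colouring of $\mathbb H_n$ with few colours, contradicting the non-$m$-colourability of $\mathbb H_n$. Products of length $k$ that are nonzero must equal $\mathbf a$ and arise from a hyperedge of generators; any element that is not a single generator contributes length at least $2$ and pushes the total past $k$. Hence each $\varphi(x_v)$ is a generator $\mathbf a_{f(v)}$, and $f$ maps every hyperedge of $\mathbb H_n$ bijectively onto a hyperedge of $\mathbb H$. Composing $f$ with an $m$-colouring of $\mathbb H$ then gives an $m$-colouring of $\mathbb H_n$, which is the required contradiction.

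If $\mathbb H$ is infinite and not finitely colourable, I would instead use the large girth of $\mathbb H_n$. Only finitely many hyperedges of $\mathbb H$ are involved in the image of $f$, and I would use that finite sub-hypergraph to obtain a colouring. With both bounds in hand, Corollary~\ref{coro25120450} yields that $S_{\mathbb H}$ and $S_{\mathbb H}^0$ are both nonfinitely based.
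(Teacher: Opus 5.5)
For finite $\mathbb H$ your argument is the paper's proof. You embed $S_c(a_1\cdots a_k)$ through the generators of one hyperedge, and you pick $m$ so that $\mathbb H$ is $m$-colourable; the paper takes $m=|V(\mathbb H)|$ and colours $\mathbb H_n$ directly by $f$, which is the same thing. You then observe that any value of $\bt_{\mathbb H_n}$ other than the top element forces every $x_v$ onto a generator, and hence gives an $m$-colouring of $\mathbb H_n$. Your extra detail on the embedding and on $(k+1)$-nilpotency is fine, and your choice of $m$ also covers any finitely colourable $\mathbb H$.

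Your last paragraph, however, does not work and should be dropped. Take an infinite $\mathbb H$ that is not finitely colourable. The finite image of $f$ may need a number of colours that grows with $n$, so you never obtain a single $\mathcal W_{k,m}$ containing $S_{\mathbb H}$. For example, let $\mathbb H$ be the disjoint union of all the $\mathbb H^{(k,m)}_n$ with $m,n\geq 3$. On each component, the natural substitution $x_v\mapsto \mathbf a_v$ violates the corresponding identity \eqref{id112500}. So $S_{\mathbb H}$ lies in no $\mathcal W_{k,m}$, and Corollary~\ref{coro25120450} cannot be applied. The statement should be read as the paper implicitly reads it, with $\mathbb H$ finite.
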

\begin{proof}
Let $\mathbb{H}=(V(\mathbb{H}), E(\mathbb{H}))$.
It is easy to see that
$S_c(a_1\cdots a_k)$ embeds into $S_{\mathbb H}$,
and so $\mathsf{V}(S_c(a_1\cdots a_k))$ is a subvariety of $\mathsf{V}(S_{\mathbb H})$.
By Corollary~\ref{coro25120450}, it is enough to prove that $S_{\mathbb H}\in \mathcal{W}_{k, m}$,
where $m=|V(\mathbb{H})|$.
So we need only to show that $S_{\mathbb H}$ satisfies the identities~\eqref{id112500}.

Fix $n \geq 3$,
and let $\varphi \colon \{x_v \mid v \in V_n\} \to S_{\mathbb{H}}$ be an arbitrary substitution.
If $\varphi(\mathbf{t}_{\mathbb{H}_n}) = \infty$,
then $\varphi(\mathbf{t}_{\mathbb{H}_n} + \mathbf{q}_{_{\mathbb{H}_n}}) = \infty$ because $\infty$ is the additive top element.

Now suppose $\varphi(\mathbf{t}_{\mathbb{H}_n}) \neq \infty$. Then for each word $x_{v_1}\cdots x_{v_k}$ in $\mathbf{t}_{\mathbb{H}_{n}}$, we have that $\varphi(x_{v_1}\cdots x_{v_k}) = \mathbf{a}$.
Hence, for every hyperedge $\{v_1, \dots, v_k\} \in E_n$, there exists $\{u_1,\ldots, u_k\} \in E(\mathbb{H})$ such that
\[
\{\varphi(x_{v_1}), \dots, \varphi(x_{v_k})\} = \{\mathbf{a}_{u_1}, \ldots, \mathbf{a}_{u_k}\}.
\]
It follows that $\mathbb{H}_{n}$ is $m$-colourable, contradicting the choice of $\mathbb{H}_n$.
Thus, the case $\varphi(\mathbf{t}_{\mathbb{H}_n}) \neq \infty$ is impossible, and we must have that
$\varphi(\mathbf{t}_{\mathbb{H}_n)} = \infty$.

Therefore, $S_{\mathbb H}$ satisfies the identities~\eqref{id112500}.
\end{proof}

\noindent\textbf{Remark.} A more general result can be obtained as follows.
Let $k, m \geq 3$ be integers, and let $\mathcal{V}$ be the ai-semiring variety generated by a class
$\{S_{\mathbb{H}_{\lambda}} \mid \lambda \in \Lambda\}$ of $k$-uniform hypergraph semirings,
where each $\mathbb{H}_{\lambda}$ has at most $m$ vertices.
Then both $\mathcal{V}$ and $\mathcal{V}^0$ are nonfinitely based.
The proof follows from Corollary~\ref{ss0uuq} together with the argument used in the proof of Corollary~\ref{coro251202}.




\section{Conclusion}
We establish a general sufficient condition for the interval $[\mathsf{V}(S),\mathsf{V}(S^0)]$ to have cardinality $2^{\aleph_0}$
and apply it to prove that $[\mathsf{V}(S_7),\mathsf{V}(S_7^0)]$ indeed has this property.
This resolves Problem~\ref{prob123} (2) and addresses specific aspects of Problem~\ref{prob1123}.
We also prove that every variety in $[\mathsf{V}(S_7), \mathsf{V}((B_2^1)^0)]$ is nonfinitely based,
which advances our understanding of Problem~\ref{problem25112101}.
Moreover, we show that for certain specific ai-semirings $S$,
both $S$ and $S^0$ are nonfinitely based, thereby making progress on Problem~\ref{prob123} (3).

The references~\cite{gpz05, pas05, rz16, rzs20, rzw} provide examples of ai-semirings $S$
for which both $S$ and $S^0$ are finitely based.
The papers \cite{rjzl, wzr, zw} completely
resolve the finite basis problem for finite ai-semirings of the forms $S_c(W)$ and $S_c(W)^0$.
In particular, $S_c(ab)$ is finitely based, whereas $S_c(ab)^0$ is nonfinitely based.
Thus, the finite basis property for $S$ and $S^0$ does not always coincide.
Nevertheless, we have not yet found an example where $S$ is nonfinitely based but $S^0$ is finitely based.

We expect further progress on Problem~\ref{problem25112101} in the near future,
since it is intimately connected with another ongoing investigation concerning
the finite basis problem for ai-semirings of order four.
Up to isomorphism, there are precisely $866$ such algebras,
which can be categorized into five distinct classes based on their additive orders.
The finite basis problem has been solved for three of these classes (see, e.g., \cite{rlzc, rlyc, yrzs}).
A systematic verification shows that, among these $866$ algebras,
exactly $43$ ai-semirings generate varieties containing $S_7$.
For three of these $43$ ai-semirings, the finite basis problem is still open.
A positive answer to Problem~\ref{problem25112101} would imply that all three are nonfinitely based.

\subsection*{Acknowledgment}
The authors thank Professor Marcel Jackson for his valuable comments on this work.
The authors also thank Jun Jiao, Simin Lyu, Chenyu Yang, Ting Yu, and Mengya Yue for their helpful discussions that contributed to this work.
Miaomiao Ren, corresponding author, is supported by National Natural Science Foundation of China (12371024, 12571020).
Xianzhong Zhao is supported by National Natural Science Foundation of China (12571020).

\bibliographystyle{amsplain}

\end{document}